\newcommand{\Rmnum}[1]{\expandafter\@slowromancap\romannumeral #1@}
\newtheorem{theorem}{Theorem}[section]
\newtheorem{proposition}[theorem]{Proposition}
\newtheorem{lemma}[theorem]{Lemma}
\newtheorem{definition}[theorem]{Definition}
\newtheorem{conjecture}[theorem]{Conjecture}
\newtheorem{question}[theorem]{Question}
\begin{document}
	
	\title{A study on parity signed graphs: the $rna$ number}
	
	\vspace{3cm}
	\author{Ligang Jin\footnotemark[1]~, Xiaoyue Chen\footnotemark[1]~, Yingli Kang\footnotemark[2]}
	\footnotetext[1]{Department of Mathematics,
		Zhejiang Normal University, Yingbin Road 688,
		321004 Jinhua,
		China;
		ligang.jin@zjnu.cn (L. Jin), 1113985495@qq.com (X. Chen); Supported by ZJNSF LY20A010014 and NSFC U20A2068.}
	\footnotetext[2]{Corresponding author. Department of Mathematics, Jinhua Polytechnic, Western Haitang Road 888, 321017 Jinhua, China; ylk8mandy@126.com; Supported by NSFC 11901258.}
	\date{}
	
	\maketitle

\begin{abstract}	
The study on parity signed graphs was initiated by Acharya and Kureethara very recently and then followed by Zaslavsky etc..
Let $(G,\sigma)$ be a signed graph on $n$ vertices. If
$(G,\sigma)$ is switch-equivalent to $(G,+)$ at a set of $\lfloor \frac{n}{2} \rfloor$ many vertices, then we call $(G,\sigma)$ a parity signed graph and $\sigma$ a parity-signature.
$\Sigma^{-}(G)$ is defined as the set of the number of negative edges of $(G,\sigma)$ over all possible parity-signatures $\sigma$.
The $rna$ number $\sigma^-(G)$ of $G$ is given by $\sigma^-(G)=\min \Sigma^{-}(G)$.
In other words,  $\sigma^-(G)$ is the smallest cut size that has nearly equal sides. 

In this paper, all graphs considered are finite, simple and connected. We apply switch method to the characterization of parity signed graphs and the study on the $rna$ number.
We prove that: for any graph $G$, $\Sigma^{-}(G)=\left\{\sigma^{-}(G)\right\}$ if and only if $G$ is $K_{1, n-1} $ with $n$ even or $K_{n}$. This confirms a conjecture proposed 
in [M. Acharya and J.V. Kureethara. Parity labeling in signed graphs. J. Prime Res. Math., to appear. arXiv:2012.07737].
Moreover, we prove a nontrivial upper bound for the $rna$ number: for any graph $G$ on $m$ edges and $n$ ($n\geq 4$) vertices, $\sigma^{-}(G)\leq \lfloor \frac{m}{2}+\frac{n}{4} \rfloor$. We show that $K_n$, $K_n-e$ and $K_n-\triangle$ are the only three graphs reaching this bound. This is the first upper bound for the $rna$ number so far. Finally, we prove that: for any graph $G$, $\sigma^-(G)+\sigma^-(\overline{G})\leq \sigma^-(G\cup \overline{G})$, where $\overline{G}$ is the complement of $G$. This solves a problem proposed in 
[M. Acharya, J.V. Kureethara and T. Zaslavsky. Characterizations of some parity signed graphs. 2020, arXiv:2006.03584v3].
\end{abstract}

\textbf{Keywords:}
parity signed graphs, the $rna$ number, parity-switch, degree-balance.

\section{Introduction}\label{sec_intro}
All graphs considered in this paper are finite, simple and connected.
Denote by $P_{n}$, $I_{n}$ and $K_n$ the path, the independent set and the complete graph on $n$ vertices, respectively. Denote by $K_n-e$, $K_n-2e$, $K_n-P_2$ and $K_n-\triangle$ the graph obtained from $K_n$ by removing an edge, removing two nonadjacent edges, removing two adjacent edges, and removing three edges which induce a triangle, respectively.
For any two vertex-disjoint graphs $G$ and $K$, denote by $G\vee K$ the graph obtained from $G$ and $K$ by connecting each vertex of $G$ to each vertex of $H$. 
Let $V_1$ and $V_2$ be two disjoint set of vertices of a graph $G$. Denote by $E_G(V_1,V_2)$ the set of edges connecting a vertex of $V_1$ to a vertex of $V_2$.
When $G$ is clear from the context, we use $E(V_1,V_2)$ for short.

\subsection{Parity signed graphs}
Signed graph is a kind of extension of graph, and the research on signed graph is one of the hot issues in graph theory in recent years, such as flow of signed graphs (eg. \cite{CQZhang_2021_flow,Raspaud_Zhu_2011}), homomorphism of signed graphs (eg. \cite{Reza_2014_homomorphism,Reza_2021_homomorphism}), coloring of signed graphs (eg. \cite{Jin_2016_choosability,Kang_steffen_2018_circular,Raspaud_2016_chromatic_number,Zaslavsky_1982}), circuit cover of signed graphs (eg. \cite{CQZhang_2019_circuit_cover}) and so on. 
The concept of signed graph was first proposed by Harary \cite{s4} in 1953. 
A \emph{signed graph} $(G,\sigma)$ is a graph $G$ together with a mapping $\sigma: E(G)\rightarrow \{1,-1\}$. We call $G$ the \emph{underlying graph} and $\sigma$ a \emph{signature} of $G$.
An edge $e$ is \emph{positive} if $\sigma(e)=1$, and \emph{negative} otherwise.
Denote by $(G,+)$ the signed graph $(G,\sigma)$ with $\sigma(e)=1$ for each $e\in E(G)$. 
Let $S=(G,\sigma)$ be a signed graph. The set of all negative edges of $S$ is denoted by $E^{-}(S)$.
Let $v\in V(S)$. By $d^{-}(v)$ (resp., $d^{+}(v)$) we mean the number of negative (resp., positive) edges of $S$ incident with $v$. Define that $d^\Delta(v)=d^-(v)-d^+(v)$ and call $d^\Delta(v)$ the \emph{sign-difference} of $v$.

Parity signed graph is a particular class of signed graphs. The concept of parity signed graphs was introduced very recently by Acharya and Kureethara \cite{s2}.
A signed graph $S=(G, \sigma)$ is called a \emph{parity signed graph} if there exists a bijection $f: V(G)\rightarrow\left\{1, 2, \cdots, n\right\}$ such that for each edge $e=uv$ of $G$, $f(u)$ and $f(v)$ have the same parity if $\sigma(e)=1$, and opposite parities if $\sigma(e)=-1$. Such an $f$ (resp., $\sigma$) is called a \emph{parity-labeling} (resp., \emph{parity-signature}) of $S$, or of $G$.
In the paper \cite{s6}, Acharya, Kureethara and Zaslavsky characterized parity signed  graphs as follows: A signed graph $S$ is a parity signed graph if and only if its vertex set $V(S)$ can be bipartitioned into two sets $V_{1}(S)$ and $V_{2}(S)$ such  that $||V_{1}(S)|-|V_{2}(S)||\leq1$ and that any two adjacent vertices $u$ and $v$ of $S$ belong to the same set if and only if the edge $uv$ is positive.
In this paper, such a bipartition is called a \emph{parity-partition}, and $V_{1}(S)$ and $V_{2}(S)$ are correspondingly called \emph{parity sets}.
The characterization above can be easily observed by taking $V_1(S)=\{v\in V(G)\colon f(v)\text{~is odd}\}$ and $V_2(S)=\{v\in V(G)\colon f(v) \text{~is even}\}$.
In what follows, we will use the language of parity-partition very often for parity signed graphs.

\subsection{The $rna$ number $\sigma^-(G)$}
In the paper \cite{s2}, the authors introduced the concept of the $rna$ number of a parity signed graph. However, according to its definition, it is more reasonable to say the $rna$ number of a graph. 
Let $G$ be a graph. Denote by $\Sigma^{-}(G)$ the set of the number of negative edges of a parity signed graph $(G,\sigma)$ over all possible parity-signatures $\sigma$.
The \emph{$rna$ number} $\sigma^{-}(G)$ of $G$ is defined as the minimum number of negative edges of a parity signed graph $(G,\sigma)$ over all possible parity-signatures $\sigma$, i.e., $\sigma^{-}(G)=\min \Sigma^{-}(G)$.

In the paper \cite{s2}, Acharya and Kureethara put forward the following conjecture on the $rna$ number and verified it for the class of trees.
\begin{conjecture}[\cite{s2}, Conjecture 1]\label{conj-AK}
	Let $G$ be a parity signed graph. Then $\sigma^-(G)=|E^-(G)|$ if and only if $G$ is either $K_{1,n}$, $n$ odd or $K_n$.	
\end{conjecture}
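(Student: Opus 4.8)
The plan is to prove the conjecture in the form stated in the abstract, namely
\[
\Sigma^-(G)=\{\sigma^-(G)\}\iff G\cong K_n\ \text{or}\ G\cong K_{1,n-1}\ \text{with}\ n\ \text{even},
\]
where $n=|V(G)|$ (the ``$K_{1,n}$ with $n$ odd'' of the conjecture is the star on $n+1$ vertices, i.e.\ $K_{1,N-1}$ with $N=n+1$ even; since the right-hand side is a property of the underlying graph, the conjectural identity $\sigma^-(G)=|E^-(G)|$ is read as holding for every parity-signature, which is exactly $\Sigma^-(G)=\{\sigma^-(G)\}$). Throughout I would work with parity-partitions: by the Acharya--Kureethara--Zaslavsky characterization quoted above, $\Sigma^-(G)$ is precisely the set of numbers $|E(V_1,V_2)|$ as $(V_1,V_2)$ runs over all bipartitions of $V(G)$ into two parts whose sizes differ by at most one; call such a bipartition \emph{balanced}. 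Thus $\Sigma^-(G)$ is a singleton exactly when all balanced bipartitions of $G$ induce cuts of equal size. The easy direction is a direct computation: every balanced cut of $K_n$ has size $\lfloor n/2\rfloor\lceil n/2\rceil$, and in $K_{1,n-1}$ with $n$ even the centre always lies in a part of size $n/2$, so the cut is the set of the $n/2$ edges joining the centre to the leaves in the other part.

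For the converse the basic tool is a \emph{parity-switch lemma}: given a balanced bipartition $(V_1,V_2)$, a vertex $u\in V_1$ and a vertex $v\in V_2$, moving $u$ into $V_2$ and $v$ into $V_1$ (a parity-switch at $\{u,v\}$) again gives a balanced bipartition, and counting the edges at $u$ and at $v$ shows that the cut size changes by exactly $2\varepsilon_{uv}-d^\Delta(u)-d^\Delta(v)$, where $\varepsilon_{uv}=1$ if $uv\in E(G)$ and $0$ otherwise, and $d^\Delta$ is taken in the parity signed graph associated with $(V_1,V_2)$. Since any two balanced bipartitions of a finite set are joined by a sequence of parity-switches, $\Sigma^-(G)=\{\sigma^-(G)\}$ if and only if $d^\Delta(u)+d^\Delta(v)=2\varepsilon_{uv}$ for every balanced bipartition and every $u\in V_1$, $v\in V_2$. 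Subtracting this identity for two vertices $x,x'$ of the same part makes $\varepsilon_{xv}-\varepsilon_{x'v}$ constant in $v$, and from this I would record the consequence used below: if $\Sigma^-(G)=\{\sigma^-(G)\}$, then for every balanced bipartition $(V_1,V_2)$ and every $x\in V_1$ that is adjacent to some but not all of $V_2$, every vertex of $V_1$ has exactly the same neighbourhood in $V_2$ as $x$.

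Now suppose $\Sigma^-(G)=\{\sigma^-(G)\}$ with $n\ge4$, $G\ncong K_n$ and $G\ncong K_{1,n-1}$; I aim to produce a single balanced bipartition violating the identity above. The combinatorial core is the claim that a connected graph $G$ on $n\ge4$ vertices which is neither $K_n$ nor $K_{1,n-1}$ contains four distinct vertices $x,y,z,x'$ with $xz\in E(G)$, $xy\notin E(G)$, and ($x'z\notin E(G)$ or $x'y\in E(G)$). Granting this, place $x,x'$ in $V_1$ and $y,z$ in $V_2$ and put the remaining $n-4$ vertices anywhere so as to balance the bipartition; then $x\in V_1$ is adjacent to $z\in V_2$ but not to $y\in V_2$, while $x'\in V_1$ has a neighbourhood in $V_2$ different from that of $x$, contradicting the recorded consequence. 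To establish the claim I would argue by contraposition: if no such quadruple exists, then for every vertex $x$ with $1\le\deg(x)\le n-2$, every neighbour $z$ of $x$ and every non-neighbour $y$ of $x$ one gets $N(y)\subseteq\{z\}$; since $G$ is connected this forces $\deg(y)=1$ with unique neighbour $z$, and since this holds for every neighbour $z$ of the fixed vertex $x$ it forces $x$ to have only one neighbour, i.e.\ $\deg(x)=1$. Hence no vertex of $G$ has degree in $\{2,\dots,n-2\}$, so every vertex has degree $1$ or $n-1$; a short analysis of connected graphs with this property (using $n\ge4$) yields $G\cong K_n$ or $G\cong K_{1,n-1}$, a contradiction. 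Finally, if $G\cong K_{1,n-1}$ with $n$ odd then putting the centre in the part of size $\lfloor n/2\rfloor$ versus $\lceil n/2\rceil$ gives cuts of sizes $(n+1)/2$ and $(n-1)/2$, so such a graph is not a singleton; thus in the star case $n$ must be even. The cases $n\le3$ are handled by inspecting the few connected graphs involved.

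The step I expect to be the real obstacle is the combinatorial extraction in the third paragraph — deducing from ``$G$ has no distinguishing quadruple $x,y,z,x'$'' the degree dichotomy ``every vertex has degree $1$ or $n-1$''. The parity-switch lemma, the connectivity of the parity-switch relation on balanced bipartitions, the reduction to the local identity $d^\Delta(u)+d^\Delta(v)=2\varepsilon_{uv}$, and the classification of connected graphs with all degrees in $\{1,n-1\}$ should all be routine; the care is needed in choosing the neighbour/non-neighbour pair relative to a vertex of intermediate degree and in iterating the forced constraints correctly.
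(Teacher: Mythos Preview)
Your argument is correct and, up to the reduction to the degree-balanced identity $d^{\Delta}(u)+d^{\Delta}(v)=2\varepsilon_{uv}$, genuinely different from the paper's. The paper fixes a \emph{single} parity-signature, observes (as you do) that the singleton hypothesis forces that signature to be degree-balanced, and then invokes Lemma~\ref{lem_degree_balanced}, which classifies all degree-balanced parity signed graphs by a fairly long case analysis split according to the parity of $n$; this yields $K_n$, $K_{1,n-1}$ and the spurious graph $P_2\vee I_{n-2}$, which is eliminated afterwards by computing $\Sigma^-(P_2\vee I_{n-2})=\{n-1,n+1\}$. You instead exploit that the identity must hold for \emph{every} balanced bipartition: subtracting it for two vertices $x,x'$ on the same side shows that $\varepsilon_{xv}-\varepsilon_{x'v}$ is constant in $v$, hence any $x$ with a neighbour and a non-neighbour across the cut forces all of $V_1$ to share its cross-neighbourhood. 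The freedom to place a chosen quadruple $(x,x',y,z)$ into a balanced bipartition then reduces the problem to the purely combinatorial claim that a connected $G\notin\{K_n,K_{1,n-1}\}$ on $n\ge4$ vertices contains such a distinguishing quadruple, which you dispatch by the degree-dichotomy argument. This route is shorter, avoids the even/odd case split, and never produces the extraneous $P_2\vee I_{n-2}$; on the other hand, the paper's Lemma~\ref{lem_degree_balanced} is a stand-alone structural result about a single signature and is reused later in the proof of Theorem~\ref{thm_bound}, so the paper gets additional mileage out of its longer lemma.
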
 
This conjecture is formulated a little bit weird since under the assumption that ``Let $G$ be a parity signed graph'', we can not say ``$G$ is either $K_{1,n}$, $n$ odd or $K_n$'', for which the parity signature of $G$ has not been given. 
We can check that the argument for the verification of this conjecture for trees (\cite{s2}, Theorem 10) is actually a proof of the following conjecture for trees, which is formulated slightly different from Conjecture \ref{conj-AK}. 
\begin{conjecture}\label{conj}
	Let $G$ be a graph on $n$ vertices. Then $\Sigma^{-}(G)=\left\{\sigma^{-}(G)\right\}$ if and only if $G$ is $K_{1, n-1} $ with $n$ even or $K_{n}$.	
\end{conjecture}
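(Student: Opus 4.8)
The plan is to reduce everything to a statement about cuts. For a parity signed graph $(G,\sigma)$ with parity sets $V_1,V_2$, an edge is negative exactly when its two endpoints lie in different parity sets, so the number of negative edges equals $|E_G(V_1,V_2)|$; conversely, every bipartition $(V_1,V_2)$ of $V(G)$ with $\bigl||V_1|-|V_2|\bigr|\le 1$ is a parity-partition for some parity-signature. Hence $\Sigma^-(G)$ is precisely the set of cut sizes $|E_G(V_1,V_2)|$ over all \emph{balanced} bipartitions (those with $\bigl||V_1|-|V_2|\bigr|\le 1$). The ``if'' direction is then a direct evaluation: for $K_n$ every balanced bipartition has parts of sizes $\lfloor n/2\rfloor$ and $\lceil n/2\rceil$, so the cut size is always $\lfloor n/2\rfloor\lceil n/2\rceil$; for $K_{1,n-1}$ with $n$ even every balanced bipartition has both parts of size $n/2$ with the centre on one side, and the cut consists exactly of the edges from the centre to the other side, so has size $n/2$. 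In both cases $\Sigma^-(G)$ is a singleton.

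For the ``only if'' direction I would first record a \emph{swap} reformulation: any two balanced bipartitions are joined by a sequence of single swaps (exchanging one vertex of $V_1$ with one vertex of $V_2$), each preserving balance, so $\Sigma^-(G)=\{\sigma^-(G)\}$ holds if and only if every such swap, on every parity partition, leaves the cut size unchanged. A short computation shows that exchanging $u\in V_1$ with $v\in V_2$ changes the cut size by $2-d^\Delta(u)-d^\Delta(v)$ when $uv\in E(G)$ and by $-d^\Delta(u)-d^\Delta(v)$ when $uv\notin E(G)$ (the contributions of the edge $uv$ itself cancel). Thus the hypothesis is equivalent to the \emph{degree-balance condition}: for every parity partition $(V_1,V_2)$ and all $u\in V_1$, $v\in V_2$, the sum $d^\Delta(u)+d^\Delta(v)$ equals $2$ if $uv\in E(G)$ and $0$ otherwise.

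Now assume $G\neq K_n$ (the cases $n\le 3$ are checked directly; e.g.\ $P_3=K_{1,2}$ has $\Sigma^-=\{1,2\}$), and take non-adjacent vertices $x,y$ together with a parity partition having $x\in V_1$, $y\in V_2$. Comparing the degree-balance condition at two vertices $u,u'\in V_1$ shows that, as $v$ runs over $V_2$, the adjacency patterns of $u$ and $u'$ to $v$ differ in a way that does not depend on $v$; a small argument then forces, after possibly interchanging the names of $V_1$ and $V_2$, that every vertex of $V_1$ is adjacent either to all of $V_2$ or to none of it. Write $V_1=A\sqcup B$ accordingly; connectivity gives $A\neq\emptyset$, and the non-edge $xy$ places one of $x,y$ in $B$, so $B\neq\emptyset$ as well. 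Plugging this structure back into the degree-balance condition forces $d^\Delta$ to be constant on $V_2$, say equal to $t$ (with $t\ge 1$ since $B\neq\emptyset$), equal to $2-t$ on $A$, and equal to $-t$ on $B$; equivalently $G[V_2]$ is $(|A|-t)$-regular, every vertex of $A$ has a fixed degree inside $G[V_1]$, and every vertex of $B$ has degree exactly $t$ inside $G[V_1]$.

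The decisive step is then to re-apply the swap reformulation to the parity partition obtained by moving a vertex of $B$ into $V_2$: running the dichotomy on this new partition forces every $A$--$B$ pair to be an edge of $G$, and a short count (bounding $d_{V_1}(a)$ for $a\in A$ against $|V_1|-1$ using $\bigl||V_1|-|V_2|\bigr|\le 1$) then yields $t\in\{1,2\}$. If $t=1$ then $|A|=1$, and the degree conditions pin $G$ down to the star centred at that single vertex, with $n$ forced to be even; if $t=2$ then $|A|=2$ and $G=K_2\vee I_{n-2}$ with $n$ odd, but this graph is \emph{not} a singleton because placing both vertices of the $K_2$ on the smaller side of a balanced bipartition produces a cut of a different size (already $\Sigma^-(K_5-\triangle)=\{4,6\}$), a contradiction. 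Combining the cases gives $G=K_n$ or $G=K_{1,n-1}$ with $n$ even. I expect the real work to be concentrated in this final stage: one must juggle several parity partitions at once while keeping all the induced regularity constraints mutually consistent, and the $t=2$ branch — the graphs $K_2\vee I_{n-2}$, such as $K_5-\triangle$ — resists any purely local argument and must be excluded by producing the right extra balanced bipartition.
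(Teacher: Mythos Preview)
Your approach matches the paper's: both translate $\Sigma^-(G)=\{\sigma^-(G)\}$ into the degree-balance condition (your ``swap reformulation'' is exactly the parity-switch of Proposition~\ref{pro_a_switch}), classify the resulting graphs as $K_n$, $K_{1,n-1}$ ($n$ even), or $P_2\vee I_{n-2}=K_2\vee I_{n-2}$ ($n$ odd), and then eliminate the last by exhibiting two distinct cut sizes. The only real difference is how the classification is carried out. The paper packages it as Lemma~\ref{lem_degree_balanced} and works entirely within a \emph{single} partition, using degree counts and the Handshake Lemma to pin down $t$; you instead invoke a second balanced partition (``moving a vertex of $B$ into $V_2$'') to obtain $A$--$B$ completeness.

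That step is imprecise as written. When $n$ is even, moving a single vertex across destroys balance, so no such partition exists. When $n$ is odd and $|V_1|>|V_2|$ the move is legal, but the degree-balance equations on the new partition do not literally force $ab\in E(G)$ for $a\in A$, $b\in B$: computing $d'^{\Delta}(a)+d'^{\Delta}(b)=2+2[ab\in E]$ shows that for $t=1$ one gets a contradiction in \emph{both} adjacency cases, so this really eliminates the subcase $(t,n)=(1,\text{odd})$ outright (which the paper does instead via a Handshake parity argument). In the remaining cases $(t=1,\ n\ \text{even})$ and $(t=2,\ n\ \text{odd})$ you do not need a second partition at all: from $d^+(a)=|V_2|-2+t$ and $t\le |V_1|-|V_2|+1$ one gets $d^+(a)=|V_1|-1$, so each $a\in A$ is already adjacent to every other vertex of $V_1$, giving $A$--$B$ completeness for free. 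With this reorganization your argument goes through and coincides with the paper's conclusion.
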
 
We will confirm the truth of Conjecture \ref{conj} in Section \ref{sec_conjecture}.
There are two crucial ideas for the proof: the application of a graph operation called ``parity-switch'', and the characterization of parity signed graphs that are ``degree-balanced''.
We will introduce and study the concept of parity-switch in Section \ref{sec_parity_switch}, and we will introduce the concept of degree-balance and address a complete list of parity signed graphs that are degree-balanced in Section \ref{sec_degree_balanced}.

Moreover, we study upper bounds for the $rna$ number in Section \ref{sec_bound}.
A result of the paper \cite{s6} (Theorem 3.4) states that for any natural number $k$, there is a graph $G$ with $\sigma^-(G)=k$. Hence, there exist no constant upper bounds for $\sigma^-$ in general.
It is known that $\sigma^-(K_n)=\lceil\frac{n}{2}  \rceil \lfloor \frac{n}{2} \rfloor$ (\cite{s2}, Proposition 9).
Actually, $\lceil\frac{n}{2}  \rceil \lfloor \frac{n}{2} \rfloor$ is a trivial upper bound for the $rna$ number of graphs on $n$ vertices.
In Section \ref{sec_bound}, we will prove a much better upper bound than the trivial one. More precisely, we prove that for any graph $G$ on $m$ edges and $n$ ($n\geq 4$) vertices, $\sigma^{-}(G)\leq \lfloor \frac{m}{2}+\frac{n}{4} \rfloor$. Additionally, 
we completely characterize parity signed graphs which approach the bound $\lceil\frac{n}{2}  \rceil \lfloor \frac{n}{2} \rfloor$ and which achieve the bound $\lfloor \frac{m}{2}+\frac{n}{4} \rfloor$. The proof is done by taking a parity signature of $G$ with minimum number of negative edges and applying an argument on the sign-difference of vertices for this parity signed graph.

In section \ref{sec_complement}, we solve a problem on the parity complement proposed in \cite{s6}.
\section{Our results}\label{sec_result}

\subsection{Parity-switches} \label{sec_parity_switch}
Switch is an important tool for the study of signed graphs.  Let $S=(G, \sigma)$ be a signed graph and $v\in V(G)$. Denote by $E(v)$ the set of edges incident with $v$. \emph{Switching} at $v$ gives a new signed graph $S^{\prime}=(G, \sigma^{\prime})$ defined as $\sigma^{\prime}(e)=-\sigma(e)$ if $e\in E(v)$; and $\sigma^{\prime}(e)=\sigma(e)$ otherwise. Two signed graphs are called \emph{switch-equivalent} if one can be obtained from the other by a sequence of switches.

For a parity signed graph, a switch at a pair of vertices with opposite parity labels (equivalently, a pair of vertices locating in different parity sets) is called a $\emph{parity-switch}$.	In this paper, parity-switch plays a crucial role in the proof of our results.

The following proposition, which follows directly from definition, gives a characterization of parity signed graphs from the perspective of switch.  
Hence, we can easily list all the parity signed graphs for any given underlyling graph.
\begin{proposition}\label{pro_all_positive}
	A signed graph $(G,\sigma)$ on $n$ vertices is a parity signed graph if and only if it can be obtained from $(G,+)$ by switching at a set of vertices of cardinality $\lfloor \frac{n}{2} \rfloor$.	
\end{proposition}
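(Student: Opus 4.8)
The plan is simply to unwind both definitions --- ``parity signed graph'' via the parity-partition characterization recalled in the introduction, and the switching operation --- and observe that they describe the same object, so the proof is essentially a definition-chase. The one elementary fact to isolate first is this: switching $(G,+)$ at a vertex set $U$ flips the sign of an edge $e=uv$ exactly when an odd number of the endpoints of $e$ lie in $U$; since an edge has exactly two endpoints, this happens precisely when exactly one of $u,v$ belongs to $U$. Hence the negative edges of the signed graph obtained from $(G,+)$ by switching at $U$ are exactly the edges of $E_G(U,V(G)\setminus U)$. (If one prefers, this can be taken as the definition of ``switching at a set'', and the agreement with the iterated single-vertex definition is then the remark just made.)

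For the ``if'' direction, I would assume $(G,\sigma)$ is obtained from $(G,+)$ by switching at a set $U$ with $|U|=\lfloor \frac{n}{2}\rfloor$, and put $V_1=U$, $V_2=V(G)\setminus U$. Then $|V_2|=\lceil \frac{n}{2}\rceil$, so $\bigl||V_1|-|V_2|\bigr|\le 1$; and by the fact above, two adjacent vertices lie in the same part of $(V_1,V_2)$ if and only if the edge joining them is positive. Thus $(V_1,V_2)$ is a parity-partition, and by the characterization of parity signed graphs from \cite{s6}, $(G,\sigma)$ is a parity signed graph.

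For the ``only if'' direction, I would start from a parity-partition $(V_1,V_2)$ of $(G,\sigma)$. Since $|V_1|+|V_2|=n$ and $\bigl||V_1|-|V_2|\bigr|\le 1$, exactly one of the two parts has cardinality $\lfloor \frac{n}{2}\rfloor$; after relabelling (the characterization is symmetric in the two parts) assume $|V_1|=\lfloor \frac{n}{2}\rfloor$. Switching $(G,+)$ at $V_1$ produces a signature $\sigma'$ whose set of negative edges is $E_G(V_1,V_2)$, which by the defining property of a parity-partition is exactly $E^{-}(G,\sigma)$. Hence $\sigma'=\sigma$, so $(G,\sigma)$ arises from $(G,+)$ by switching at $V_1$, a set of size $\lfloor \frac{n}{2}\rfloor$, as required.

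There is no genuine obstacle here; the only step that warrants a moment's care is the parity argument behind ``flipped iff exactly one endpoint in $U$'' (equivalently, the order-independence of iterated switching), and that is immediate once stated. Everything else is bookkeeping with the cardinality constraint $\bigl||V_1|-|V_2|\bigr|\le 1$.
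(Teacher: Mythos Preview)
Your proof is correct and follows exactly the definition-chase that the paper has in mind; the paper in fact omits the proof entirely, remarking only that the proposition ``follows directly from definition.'' The one tiny slip is the phrase ``exactly one of the two parts has cardinality $\lfloor \frac{n}{2}\rfloor$'': when $n$ is even both parts do, but this is harmless since all you need is that \emph{at least} one part has that size, which is what you then use.
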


From any given parity signed graph, the following proposition provides a way to generate new parity signed graphs whose underlying graph is same as before.

\begin{proposition}\label{pro_a_switch}
	Given a parity signed graph $S=(G,\sigma)$, the graph $S^{\prime}=(G, \sigma^{\prime})$ obtained from $S$ by a parity-switch is also a parity signed graph.	
\end{proposition}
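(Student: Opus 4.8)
The plan is to prove this via the switch characterization of parity signed graphs given in Proposition~\ref{pro_all_positive}, together with the elementary observation that switching successively at two vertex sets has the same net effect on the signature as switching once at their symmetric difference.

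First I would fix a parity-partition $(V_1(S),V_2(S))$ of $S$, arranged so that the parity-switch producing $S'$ is the switch of $S$ at a pair $\{u,v\}$ with $u\in V_1(S)$, $v\in V_2(S)$, and $|V_1(S)|=\lfloor \tfrac n2\rfloor$; this is without loss of generality, since the two parity sets play symmetric roles and one of them necessarily has cardinality $\lfloor \tfrac n2\rfloor$, and the switched pair meets both sets. I then claim that $S$ itself is obtained from $(G,+)$ by switching at $V_1(S)$: after that switch an edge $xy$ becomes negative precisely when exactly one of $x,y$ lies in $V_1(S)$, i.e. precisely when $x$ and $y$ lie in different parity sets, which by the defining property of a parity-partition is exactly when $\sigma(xy)=-1$. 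Hence switching $(G,+)$ at $V_1(S)$ reproduces $S$.

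Next, $S'$ is obtained from $(G,+)$ by first switching at $V_1(S)$ and then switching at $\{u,v\}$. Since switching twice at the same vertex leaves all edge signs unchanged, this composite switch equals the switch at the set $X:=(V_1(S)\setminus\{u\})\cup\{v\}$. We removed exactly one vertex from $V_1(S)$ and added exactly one vertex from outside it, so $|X|=|V_1(S)|=\lfloor \tfrac n2\rfloor$. Applying Proposition~\ref{pro_all_positive} once more, $S'=(G,\sigma')$ is a parity signed graph; in fact $(X,\,V(G)\setminus X)$ is a parity-partition of $S'$, so a parity-partition of $S'$ is obtained from that of $S$ simply by exchanging $u$ and $v$ between the two parity sets.

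I do not expect a genuine obstacle here; the only points needing a little care are choosing the orientation of the parity-partition when $n$ is odd, so that the switch set has size $\lfloor \tfrac n2\rfloor$ rather than $\lceil \tfrac n2\rceil$, and---if one instead prefers the more hands-on route of directly checking the swapped partition $(V_1(S)\setminus\{u\})\cup\{v\}$ against $\sigma'$ edge by edge---handling the edge $uv$, when present in $G$, separately: the parity-switch leaves $\sigma(uv)$ unchanged while $u$ and $v$ stay in different parts, so the required biconditional is preserved there as well.
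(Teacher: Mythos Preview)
Your proof is correct. The paper's own proof is a one-liner that works directly with the parity-labeling: it swaps the labels $f(u)$ and $f(v)$ and observes that the resulting bijection is a parity-labeling of $S'$. Your argument instead goes through the switch characterization of Proposition~\ref{pro_all_positive}, realizing $S$ as a switch of $(G,+)$ at $V_1(S)$ and then composing with the switch at $\{u,v\}$ via symmetric difference. Both approaches are elementary and essentially encode the same observation---that the parity-partition of $S'$ is obtained from that of $S$ by exchanging $u$ and $v$---but the paper's label-swap is shorter because it avoids the detour through $(G,+)$ and the bookkeeping about which parity set has size $\lfloor n/2\rfloor$. Your route has the minor advantage of making explicit the new parity set $X=(V_1(S)\setminus\{u\})\cup\{v\}$, which is exactly what one needs later in Proposition~\ref{pro_switches}; in fact, the ``hands-on'' alternative you sketch in your final paragraph is essentially the paper's proof.
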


\begin{proof}
	Exchange the labels of the two vertices we switch at.
	It is easy to verify by definition that the resulting labeling is a parity-labeling of $S'$.
\end{proof}

Combining the previous two propositions, one can easily deduce the following proposition, which shows that parity-switch is the exact way to describe the transformation between any two parity signed graphs having the same underlying graph.
\begin{proposition}\label{pro_switches}
	Let $(G,\sigma)$ be a parity signed graph.	Then a signed graph $(G,\sigma')$ is a parity signed graph if and only if it can be obtained from $(G,\sigma)$ by a sequence of parity-switches.
\end{proposition}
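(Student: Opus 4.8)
The plan is to prove both directions using Propositions~\ref{pro_all_positive} and \ref{pro_a_switch}. For the ``only if'' direction, suppose $(G,\sigma')$ is a parity signed graph. By Proposition~\ref{pro_all_positive}, both $(G,\sigma)$ and $(G,\sigma')$ arise from $(G,+)$ by switching at vertex sets $U$ and $U'$ respectively, each of cardinality $\lfloor n/2\rfloor$. Since switching at a set $X$ and then at a set $Y$ produces the same signature as switching at the symmetric difference $X\triangle Y$, passing from $(G,\sigma)$ to $(G,\sigma')$ amounts to switching at the set $W=U\triangle U'$. The key combinatorial observation is that $|U|=|U'|=\lfloor n/2\rfloor$ forces $|U\setminus U'|=|U'\setminus U|$, so $W$ can be partitioned into pairs $\{u_i,u_i'\}$ with $u_i\in U\setminus U'$ and $u_i'\in U'\setminus U$; switching at all of $W$ is the same as performing the parity-switch at each pair $\{u_i,u_i'\}$ in turn. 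It remains only to check that each such pair really is a parity-switch, i.e.\ that $u_i$ and $u_i'$ lie in different parity sets of the intermediate parity signed graph reached so far — this follows because at the stage we switch the pair $\{u_i,u_i'\}$, exactly one of the two has already been moved to the opposite side relative to $(G,+)$, hence they sit in opposite parity sets at that moment. Thus $(G,\sigma')$ is obtained from $(G,\sigma)$ by a sequence of parity-switches.

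For the ``if'' direction, suppose $(G,\sigma')$ is obtained from $(G,\sigma)$ by a sequence of parity-switches. Since $(G,\sigma)$ is a parity signed graph, a single application of Proposition~\ref{pro_a_switch} shows that the result of one parity-switch is again a parity signed graph; iterating this along the sequence shows $(G,\sigma')$ is a parity signed graph. This direction is essentially immediate.

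The main obstacle is the bookkeeping in the ``only if'' direction: one must verify carefully that the pairing of $W=U\triangle U'$ into opposite-side pairs can be carried out consistently, and that at each step of the induced sequence of switches the pair really lies in the two parity sets of the current parity signed graph, so that each step genuinely qualifies as a parity-switch rather than an arbitrary switch. Tracking the parity labels (equivalently, tracking which vertices have been moved relative to $(G,+)$) after each step is the delicate point, though it is conceptually straightforward once one fixes the order in which the pairs are processed.
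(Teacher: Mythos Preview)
Your proposal is correct and follows essentially the same approach as the paper's proof: both directions use Propositions~\ref{pro_all_positive} and~\ref{pro_a_switch} in exactly the way you describe, passing via the symmetric difference $U\triangle U'$ and pairing $U\setminus U'$ with $U'\setminus U$. If anything, you are more careful than the paper about the intermediate bookkeeping---the paper simply notes that $V_1\setminus V_2$ and $V_2\setminus V_1$ have equal cardinality and lie in different parity sets of $(G,\sigma)$, then concludes without explicitly tracking the parity sets after each switch; your observation that at every stage the not-yet-switched $u_i$ still lies in (the updated) $U$-side and $u_i'$ in its complement makes this rigorous.
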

\begin{proof}
	The sufficiency follows by applying Proposition \ref{pro_a_switch} multiple times.
	For the necessity, by Proposition \ref{pro_all_positive}, both $(G,\sigma)$ and $(G,\sigma')$ can be obtained from $(G,+)$ by switching at some set of vertices of cardinality $\lfloor \frac{|V(G)|}{2} \rfloor$, say $V_1$ and $V_2$, respectively.
	So, on one hand, $(G,\sigma')$ can be obtained from $(G,\sigma)$ by switching at $(V_1-V_1\cap V_2)\cup (V_2-V_1\cap V_2)$. On the other hand, the two sets $V_1-V_1\cap V_2$ and $V_2-V_1\cap V_2$ have the same cardinality and different parities under the parity labeling of $(G,\sigma)$. Therefore, we can conclude that the switch from $(G,\sigma)$ to $(G,\sigma')$ above consists of a sequence of parity-switches, which completes the proof.
\end{proof}

\subsection{Degree-balanced parity signed graphs}\label{sec_degree_balanced}
\begin{definition}
	Let $S$ be a parity signed graph with parity sets $V_1$ and $V_2$. $S$ is \emph{degree-balanced} if for any two vertices $u\in V_1$ and $v\in V_2$,
	\begin{equation*}
	\begin{cases}
	d^\triangle (u)+d^\triangle(v)= 2, \text{~if~} uv\in E(S);\\
	d^\triangle (u)+d^\triangle(v)= 0, \text{~if~} uv\notin E(S).	
	\end{cases}
	\end{equation*}
\end{definition}

\begin{lemma}\label{lem_degree_balanced}
	Let $S=(G,\sigma)$ be a parity signed graph on $n$ vertices. If $S$ is degree-balanced, then $G$ must be $K_{1,n-1}$ or $K_n$ when $n$ is even; and $G$ must be $P_2\vee I_{n-2}$ or $K_n$ when $n$ is odd.
\end{lemma}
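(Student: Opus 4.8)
The plan is to extract as much structural information as possible from the two degree-balance equations by summing them over suitable vertex pairs. First I would fix the parity sets $V_1$ and $V_2$, writing $n_1=|V_1|$, $n_2=|V_2|$, so $\{n_1,n_2\}=\{\lceil n/2\rceil,\lfloor n/2\rfloor\}$. For a vertex $u\in V_1$, note that $d^-(u)$ counts edges from $u$ to $V_2$ and $d^+(u)$ counts edges from $u$ to $V_1$, so $d^\triangle(u)=|E(u,V_2)|-|E(u,V_1)|$ (and symmetrically for $v\in V_2$). The key first step is: for a \emph{fixed} $u\in V_1$, sum the degree-balance identity over all $v\in V_2$. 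Splitting $V_2$ into the neighbors and non-neighbors of $u$, the right-hand side becomes $2d^-(u)$ (each neighbor contributes $2$, each non-neighbor contributes $0$), and the left-hand side becomes $n_2\, d^\triangle(u)+\sum_{v\in V_2} d^\triangle(v)$. Writing $T_2:=\sum_{v\in V_2} d^\triangle(v)$, this gives the clean relation $n_2\, d^\triangle(u)+T_2 = 2 d^-(u)$ for every $u\in V_1$, and symmetrically $n_1\, d^\triangle(v)+T_1 = 2 d^-(v)$ for every $v\in V_2$, where $T_1:=\sum_{u\in V_1} d^\triangle(u)$.

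Next I would exploit that $d^-(u)\le n_2$ (all negative edges at $u$ go to $V_2$) and, more usefully, that the negative edges form precisely the cut $E(V_1,V_2)$; hence $\sum_{u\in V_1} d^-(u)=\sum_{v\in V_2} d^-(v)=|E(V_1,V_2)|$. Summing $n_2 d^\triangle(u)+T_2=2d^-(u)$ over $u\in V_1$ yields $n_2 T_1 + n_1 T_2 = 2|E(V_1,V_2)|$, and the symmetric sum gives the same equation, so this alone is not enough; the real leverage comes from the \emph{pointwise} relations. From $n_2 d^\triangle(u) = 2d^-(u)-T_2$ we see that $d^\triangle(u)$ takes the same value for all $u\in V_1$ with the same $d^-(u)$, and in fact $d^-(u)$ determines $d^\triangle(u)$ linearly; combined with $d^\triangle(u)=d^-(u)-d^+(u)$ and $d^-(u)+d^+(u)=d_G(u)$, this pins down $d_G(u)$ as an affine function of $d^-(u)$ alone, with slope forced by $n_2$. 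The plan is then a short case analysis on $n_2$ (equivalently on the parity of $n$ and which of $V_1,V_2$ is larger): when $n_1=n_2$ the slope condition forces every $u\in V_1$ to have either $d^-(u)=0$ or $d^-(u)=n_2$, i.e. $u$ is joined to all of $V_2$ or none of it, and similarly inside each part, which quickly collapses $G$ to either $K_n$ or a star; when $|n_1-n_2|=1$ one extra low-degree vertex is permitted, producing $P_2\vee I_{n-2}$ in the odd case (the two vertices of the $P_2$ being the ``full-degree'' vertices, one in each part).

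The main obstacle I anticipate is the bookkeeping in the case distinction: translating the linear constraint ``$d_G(u)$ is an affine function of $d^-(u)$ with a prescribed slope'' into the conclusion that adjacency is essentially all-or-nothing, while correctly tracking the off-by-one slack when $n$ is odd and the two parts differ in size. A secondary subtlety is verifying that the candidate graphs $K_{1,n-1}$, $P_2\vee I_{n-2}$, $K_n$ genuinely admit a degree-balanced parity signature (so that the list is exact and not merely an upper envelope) — for $K_n$ one checks the balanced cut directly, for the star one places the center alone on one side, and for $P_2\vee I_{n-2}$ one puts one endpoint of the $P_2$ with roughly half the independent set on each side; each of these is a routine finite check. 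Throughout I would keep in mind that $G$ is connected, which is what rules out degenerate disconnected configurations that otherwise satisfy the equations.
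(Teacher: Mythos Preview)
Your global-summation idea is a genuinely different route from the paper's: the paper partitions each $V_i$ into the vertices with or without a cross-neighbor and then propagates $d^\triangle$-values locally from a single anchor vertex, whereas you sum the degree-balance relation over all $v\in V_2$ to get the pointwise identity $n_2\,d^\triangle(u)+T_2=2d^-(u)$. That identity is correct and useful, and the overall shape of the argument can be made to work.

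However, the crucial step---``when $n_1=n_2$ the slope condition forces every $u\in V_1$ to have either $d^-(u)=0$ or $d^-(u)=n_2$''---is not justified by what you have written. The affine relation $d^\triangle(u)=(2d^-(u)-T_2)/n_2$ alone, even together with integrality and the bounds $0\le d^-(u)\le n_2$, does \emph{not} force $d^-(u)\in\{0,n_2\}$ (for instance when $n_2$ is even there can be a third admissible residue). What is missing is the observation that for any $u,u'\in V_1$ and any fixed $v\in V_2$ one has $d^\triangle(u)-d^\triangle(u')=2([uv\in E]-[u'v\in E])\in\{-2,0,2\}$, so $d^\triangle|_{V_1}$ takes at most two values and they differ by exactly $2$ if distinct. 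Feeding that into your linear relation \emph{does} force the corresponding $d^-$-values to differ by $n_2$, hence $d^-(u)\in\{0,n_2\}$. Without this ingredient the ``slope'' heuristic is not a proof. Also, the phrase ``and similarly inside each part'' is unclear: the degree-balance hypothesis constrains only cross pairs, so statements about adjacencies \emph{within} $V_i$ must be derived via $d^+(u)=d^-(u)-d^\triangle(u)$, not by symmetry.

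Finally, even after the dichotomy is established, the endgame is not as short as ``quickly collapses'': in the paper this occupies a real case analysis (splitting on the parity of $n$, on which side contains vertices with $d^-=0$, and---in the odd case---on the value of an auxiliary parameter), and one has to use connectivity and a parity/hand-shaking count to eliminate spurious configurations before landing on $K_n$, $K_{1,n-1}$, or $P_2\vee I_{n-2}$. Your proposal should budget for that. (The converse verification you mention---that the three graphs admit degree-balanced parity signatures---is not required by the lemma as stated.)
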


\begin{proof}
	Let $V_1$ and $V_2$ be the two parity sets of $S$. Since $S$ is degree-balanced, by definition, for any two vertices $u\in V_1$ and $v\in V_2$,
	\begin{equation} \label{eq_d_equal}
	\begin{cases}
	d^\triangle (u)+d^\triangle(v)= 2, \text{~if~} uv\in E(S);\\
	d^\triangle (u)+d^\triangle(v)= 0, \text{~if~} uv\notin E(S).	
	\end{cases}
	\end{equation}
	
	For $i\in\{1,2\}$, let $V_{i1}$ consist of the vertices of $V_i$ which has a neighbor not in $V_i$, and let $V_{i2}=V_i\setminus V_{i1}$.
	Since $G$ is connected, $V_{11}\neq \emptyset$ and $V_{21}\neq \emptyset$.	
	However, both $V_{12}$ and $V_{22}$ might be empty sets.
	So, we distinguish two cases as follows.
	
	Case 1:  Assume $V_{12}=V_{22}=\emptyset$. In this case, $V_i=V_{i1}$ for $i\in\{1,2\}$.
	We will show that $E(V_1,V_2)$ is complete. Suppose to the contrary that there exist $x\in V_1$ and $y\in V_2$ such that $xy\notin E(G)$.
	By the definition of $V_{i1}$, $x$ has a neighbor (say $x'$) in $V_2$ and $y$ has a neighbor (say $y'$) in $V_1$. Applying Formula (\ref{eq_d_equal}) to pairs $\{x,y\}$, $\{x,x'\}$ and $\{y,y'\}$ gives that
	\begin{equation*}
	\begin{cases}
	d^\triangle(x)+d^\triangle(y)=0,\\
	d^\triangle(x)+d^\triangle(x')=2,\\
	d^\triangle(y)+d^\triangle(y')=2.
	\end{cases}
	\end{equation*}
	It follows that $d^\triangle(x')+d^\triangle(y')=4,$ a contradiction to Formula (\ref{eq_d_equal}).
	This complete the proof that $E(V_1,V_2)$ is complete.
	
	For any vertices $x\in V_1$ and $y\in V_2$, Formula (\ref{eq_d_equal}) implies that $d^\triangle(x)+d^\triangle(y)=2$. Notice that $d^-(x)+d^-(y)=n$.
	So,
	$$d^+(x)+d^+(y)=d^-(x)+d^-(y)-d^\triangle(x)-d^\triangle(y)=n-2.$$
	This means that $x$ is adjacent to all the other vertices in $V_1$, and so does $y$ in $V_2$. By the arbitrariness of $x$ and $y$, both $V_1$ and $V_2$ induce complete graphs in $G$. Recall that $E(V_1,V_2)$ is complete.
	So, $G$ is the complete graph $K_n$, as desired.

	Case 2:\quad  Assume $V_{12}\neq{\emptyset}$ or $V_{22}\neq{\emptyset}$. W.l.o.g., let us say $V_{22}\neq{\emptyset}$.
	
    Take an arbitrary $z\in V_{22}$ and let $d^\Delta(z)=-t$.
	For any $x\in V_1$, we have $xz\notin E(S)$ by the definition of $V_{22}$. It follows from Formula (\ref{eq_d_equal}) that $d^\Delta(x)=t$.
	For any $y\in V_{21}$, since $y$ has a neighbor (say $y'$) in $V_1$, applying Formula (\ref{eq_d_equal}) to $y$ and $y'$ gives that $d^\Delta(y)=-t+2$. Now, $d^\Delta(x)+d^\Delta(y)=2$. By applying Formula (\ref{eq_d_equal}) to $x$ and $y$, we have $xy\in E(S)$.
	By the arbitrariness of $x$ and $y$, $E(V_1,V_{21})$ is complete.
	
	Clearly, $d^-(y)=|V_1|$ and $d^-(z)=0$.
	Since $y$ has at most $|V_2|-1$ neighbors in $V_2$, we have $d^+(y)\leq |V_{2}|-1$. So,
	\begin{equation} \label{eq_y}
	-t+2=d^\Delta(y)=d^-(y)-d^+(y)\geq|V_{1}|-|V_{2}|+1.
	\end{equation}
	Since $z$ has at least one neighbor in $V_2$, $d^+(z)\geq 1$.
	So,
	\begin{equation}\label{eq_t_geq_1}
	-t=d^\Delta(z)=d^-(z)-d^+(z)\leq -1.
	\end{equation}
	Combining Equations ($\ref{eq_y}$) and ($\ref{eq_t_geq_1}$) gives $|V_{1}|-|V_{2}|\leq 0$.
	By definition, $|V_{1}|-|V_{2}|\in\{-1,0,1\}$. So, it suffices to distinguish the following two cases.
	
	Case 1.1: Assume $|V_{1}|-|V_{2}|=0$. In this case, $n$ is even.
	It follows from Equations (\ref{eq_y}) and (\ref{eq_t_geq_1}) that $t=1$.
	So, we can compute that $d^+(y)=\frac{n}{2}-1$ and $d^+(z)=1$.
	In particular, the former equality implies that $yz\in E(S)$.
	Then it follows from the latter equality that such $y$ is unique, i.e., $|V_{21}|=1$.
	Notice that $y$ is the only neighbor of $z$. So, $V_{22}$ is an independent set of $G$.
	Moreover, $d^+(x)=d^-(x)-d^\triangle(x)=1-1=0$. It follows that $V_{1}$ is an independent set of $G$.
	Notice that $y$ is adjacent to all the vertices of $V_{1}\cup V_{22}$.
	So, we can conclude that $G$ is $K_{1,n-1}$, as desired.
	
	Case 1.2: Assume $|V_{1}|-|V_{2}|=-1$. In this case, $n$ is odd. Let $p=|V_{21}|$ and $q=|V_{22}|$. Clearly, $p+q=\frac{n+1}{2}.$
	It follows from Equations (\ref{eq_y}) and (\ref{eq_t_geq_1}) that $t\in\{1,2\}$.
	We distinguish two cases by the value of $t$.
	
	Let $t=1$. So, we can compute that $d^{+}(y)=\frac{n-3}{2}$ and $d^{+}(z)=1$.
	On one hand, the sum
	$$\sum_{v\in V_2}d^+(v)=p\times \frac{n-3}{2} + q\times 1=p(p+q-2)+q$$
	must be even by the Hand-shaking Theorem. It follows that both $p$ and $q$ are even.
	On the other hand, since $z$ has at most one neighbor in $V_{21}$, $|E(V_{21},V_{22})|\leq q$. Moreover, since $y$ has at least $q-1$ neighbors in $V_{22}$, $|E(V_{21},V_{22})|\geq p(q-1)$.
    These two inequalities lead to
	\begin{equation*}
	q\geq p(q-1).
	\end{equation*}
	Recall that both $p$ and $q$ are even. We can compute from this inequality that
	$p=q=2$. So, $|V_1|=3$ and for each $x\in V_1$, we have $d^{+}(x)=d^{-}(x)-d^\triangle(x)=p-t=1$.
	So, the sum $\sum_{v\in V_1}d^+(v)=3$ is an odd integer, a contradiction to the Hand-shaking Theorem.
	
	Let $t=2$. Similarly, we can deduce that $d^{+}(y)=\frac{n-1}{2}$ and  $d^{+}(z)=2$. The former equality implies that $y$ is adjacent to all other vertices of $V_2$. Then it follows from the latter equality that $|V_{21}|\leq 2$.
	Moreover, $|V_{21}|=d^-(x)\geq d^-(x)-d^+(x)=d^\triangle(x)=t=2$.
	So, $|V_{21}|=2$.
	We can compute that $d^{+}(x)=0$.
	It follows from $d^{+}(z)=2$ and $d^{+}(x)=0$ that $V_1$ and $V_{22}$ are independent sets of $G$, respectively.
	Now, we can see that $G$ is $ P_{2} \vee I_{n-2}$, as desired.
\end{proof}
	
\subsection{The proof of Conjecture \ref{conj}}\label{sec_conjecture}
\begin{proof}
    Sufficiency has already been proved  (see \cite{s2}, Proposition 9 and Theorem 10).
		
	Necessity. Take any parity signature $\sigma$ of $G$ and let $S=(G,\sigma)$. Let $V_1$ and $V_2$ be the two parity sets of $S$.
	By Proposition \ref{pro_a_switch}, switching at a vertex of $V_1(S)$ and a vertex of $V_2(S)$ (i.e., a parity-switch) in $S$ results into another parity signed graph. As $\Sigma^{-}(G)=\left\{\sigma^{-}(G)\right\}$, this switch will not change the number of negative edges of the whole graph. Thus, $S$ is degree-balanced.
	By Lemma \ref{lem_degree_balanced}, $G$ must be $K_{1,n-1}$ or $K_n$ when $n$ is even; and $G$ must be $P_2\vee I_{n-2}$ or $K_n$ when $n$ is odd.
	Moreover, it is easy to compute that $\Sigma^-(P_2\vee I_{n-2})=\{n-1,n+1\}$.
	Hence, $G$ can not be $P_2\vee I_{n-2}$ since the assumption that $\Sigma^{-}(G)=\left\{\sigma^{-}(G)\right\}$.
	This completes the proof of the conjecture.
\end{proof}

\subsection{Upper bounds for the $rna$ number}\label{sec_bound}
The following theorem gives a trivial upper bound for the $rna$ number as well as a complete characterization of graphs whose $rna$ number is very close to this bound.
\begin{theorem}\label{thm-trivial-bound}
	Let $G$ be a graph on $n$ vertices. Then $\sigma^-(G)\leq \lceil\frac{n}{2}  \rceil \lfloor \frac{n}{2} \rfloor$ and the following statements hold:
	\begin{enumerate}
	\setlength{\itemsep}{0pt}
	\item $\sigma^-(G)=\lceil\frac{n}{2}  \rceil \lfloor \frac{n}{2} \rfloor$ if and only if $G=K_n$;
	\item $\sigma^-(G)=\lceil\frac{n}{2}  \rceil \lfloor \frac{n}{2} \rfloor - 1$ if and only if $G=K_n-e$;
	\item $\sigma^-(G)=\lceil\frac{n}{2}  \rceil \lfloor \frac{n}{2} \rfloor - 2$ if and only if $G\in \{K_n-\triangle, K_n-2e, K_n-P_2\}$.
    \end{enumerate}
\end{theorem}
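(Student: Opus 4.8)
The plan is to reduce the whole statement to a question about \emph{balanced cuts} of the complement $\overline{G}$. Call a bipartition $(V_1,V_2)$ of a vertex set \emph{balanced} if $||V_1|-|V_2||\le 1$. By Proposition~\ref{pro_all_positive} (together with the fact that switching at a set $U$ turns negative exactly the edges of $E_G(U,V(G)\setminus U)$), the parity signed graphs on $G$ are precisely the signed graphs whose negative edge set is $E_G(V_1,V_2)$ for some balanced bipartition $(V_1,V_2)$ of $V(G)$; hence $\sigma^-(G)=\min|E_G(V_1,V_2)|$ over all balanced bipartitions. Since $|E_G(V_1,V_2)|+|E_{\overline{G}}(V_1,V_2)|=|V_1|\,|V_2|=\lceil\frac n2\rceil\lfloor\frac n2\rfloor$ for every balanced bipartition, introducing $\mathrm{mb}(H):=\max\{|E_H(V_1,V_2)|:(V_1,V_2)\text{ a balanced bipartition of }V(H)\}$ we obtain the identity
\[
\sigma^-(G)=\Big\lceil\tfrac n2\Big\rceil\Big\lfloor\tfrac n2\Big\rfloor-\mathrm{mb}(\overline{G}).
\]
This yields the trivial bound immediately and turns the three equivalences into: $\mathrm{mb}(\overline{G})=0$ iff $\overline{G}$ is edgeless (i.e.\ $G=K_n$); $\mathrm{mb}(\overline{G})=1$ iff $\overline{G}$ has exactly one edge (i.e.\ $G=K_n-e$); and $\mathrm{mb}(\overline{G})=2$ iff $\overline{G}$ consists of a triangle, two independent edges, or a $P_3$, together with isolated vertices (i.e.\ $G\in\{K_n-\triangle,\,K_n-2e,\,K_n-P_2\}$).

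For the ``only if'' directions I would run a first-moment argument. Every balanced bipartition separates exactly $\lceil\frac n2\rceil\lfloor\frac n2\rfloor$ of the $\binom n2$ pairs of vertices, so by symmetry a uniformly random balanced bipartition separates each fixed pair with probability $p_n=\lceil\frac n2\rceil\lfloor\frac n2\rfloor/\binom n2$, and one checks $p_n>\tfrac12$ for all $n\ge2$. Thus if $\overline{G}$ has $m'$ edges, a random balanced bipartition cuts $m'p_n>m'/2$ of them in expectation, whence some balanced bipartition cuts strictly more than $m'/2$; in particular $m'\ge1$ gives $\mathrm{mb}(\overline{G})\ge1$, $m'\ge2$ gives $\mathrm{mb}(\overline{G})\ge2$, and $m'\ge4$ gives $\mathrm{mb}(\overline{G})\ge3$. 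This settles $\mathrm{mb}(\overline{G})\in\{0,1\}$ outright and reduces $\mathrm{mb}(\overline{G})=2$ to the case that $\overline{G}$ has two or three edges. Two edges are either independent or adjacent, giving $K_n-2e$ or $K_n-P_2$. If $\overline{G}$ has three edges not forming a triangle, they contain no odd cycle, hence span an (at most six vertex) bipartite graph; I would properly $2$-colour the spanned vertices and extend the colouring to a balanced bipartition of $V(G)$ --- there is enough room because $G$ is connected, which forces $n$ to be large enough for whichever of $P_4$, $K_{1,3}$, $P_3\cup K_2$, $3K_2$ occurs --- so all three edges are cut and $\mathrm{mb}(\overline{G})\ge3$, a contradiction; hence three edges must form a triangle, i.e.\ $G=K_n-\triangle$.

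For the ``if'' directions one verifies directly that the listed complements have $\mathrm{mb}$ equal to $0,1,2$: the edgeless graph trivially; a one-edge graph can have its edge cut but certainly cannot reach $\mathrm{mb}\ge2$; two edges can always be placed across a balanced bipartition; and a triangle can never be cut entirely (it is not bipartite), while a balanced bipartition separating one triangle-vertex from the other two --- which exists for all $n\ge3$ --- cuts exactly two of its edges. Together with the displayed identity this proves the theorem. The main obstacle I foresee is the small-$n$ bookkeeping in the ``only if'' part: the first-moment bound disposes of ``$\overline{G}$ has at least four edges'' cleanly, but the borderline configuration of three non-triangular edges --- and checking that the balanced extension of the $2$-colouring really is available for the smallest admissible $n$, where connectivity of $G$ is exactly what provides the missing vertices --- requires a short, explicit case check over the four bipartite three-edge graphs.
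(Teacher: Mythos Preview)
Your argument is correct. The reformulation $\sigma^-(G)=\lceil\frac n2\rceil\lfloor\frac n2\rfloor-\mathrm{mb}(\overline{G})$ is exactly what the paper is doing implicitly, and once stated it makes the three equivalences transparent. The place where you genuinely diverge from the paper is in the ``only if'' direction for statement~3. The paper simply asserts that if $\overline G$ contains three edges not forming a triangle then ``we can take a choice of $V_1$ and $V_2$'' putting all three across the cut; you instead (i) use a first-moment bound $\mathrm{mb}(\overline G)>m'/2$ to dispose of $m'\ge 4$ in one stroke, and (ii) for $m'=3$ run the bipartite $2$-colouring argument with an explicit case check over $P_4,\ K_{1,3},\ P_3\cup K_2,\ 3K_2$. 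Your route is slightly longer but more careful: the paper's bare assertion actually \emph{fails} for $K_{1,3}$ on four vertices (no balanced $2+2$ partition can cut all three edges of a star), and it is precisely the connectivity hypothesis on $G$---which you invoke---that rules this configuration out. So your extra bookkeeping is not wasted; it is filling a gap the paper glosses over. The first-moment step, on the other hand, is pleasant but not strictly needed: once one knows that any three non-triangle edges in $\overline G$ can be simultaneously cut (your step~(ii)), the case $m'\ge 4$ follows as in the paper by selecting three such edges, which is always possible when the edges are not exactly a triangle.
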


\begin{proof}
	Let $S=(G,\sigma)$ be a parity signed graph with minimum $|E^-(S)|$, and let $V_1$ and $V_2$ be the two parity sets of $S$.
	Then $\sigma^-(G)\leq |E^-(S)| = |E(V_1,V_2)| \leq \lceil\frac{n}{2}  \rceil \lfloor \frac{n}{2} \rfloor$.
	
	For the three statements, let us first prove the sufficiency.
	For $G=K_n$, there are precisely $\lceil\frac{n}{2}  \rceil \lfloor \frac{n}{2} \rfloor$ edge between $V_1$ and $V_2$ Hence, $\sigma^-(K_n)=\lceil\frac{n}{2}  \rceil \lfloor \frac{n}{2} \rfloor$.
	For $G=K_n-e$, we have $|E^-(S)|=\lceil\frac{n}{2}  \rceil \lfloor \frac{n}{2} \rfloor$ if $e\in E(V_1,V_2)$; and $|E^-(S)|=\lceil\frac{n}{2}  \rceil \lfloor \frac{n}{2} \rfloor -1$ otherwise. 
	Hence, $\sigma^-(K_n-e)=\lceil\frac{n}{2}  \rceil \lfloor \frac{n}{2} \rfloor - 1$. 
	Similarly, we can prove that $\sigma^-(K_n-2e)=\sigma^-(K_n-P_2)=\lceil\frac{n}{2}  \rceil \lfloor \frac{n}{2} \rfloor - 2$.
	For $G=K_n-\triangle$, we have $|E(\triangle)\cap E(V_1,V_2)|\in\{0,2\}$, which implies that $|E^-(S)|\in\{\lceil\frac{n}{2} \rceil \lfloor \frac{n}{2} \rfloor - 2, \lceil\frac{n}{2} \rceil \lfloor \frac{n}{2} \rfloor\}$.
	Hence, $\sigma^-(K_n-\triangle)= \lceil\frac{n}{2} \rceil \lfloor \frac{n}{2} \rfloor - 2$.
	
    It remains to prove the necessity. For the first statement, suppose to the contrary that $G\neq K_n$. Let $xy\neq E(G)$. Take a choice of $V_1$ and $V_2$ such that $x\in V_1$ and $y\in V_2$. So, $\sigma^-(G)\leq |E^-(S)|\leq \lceil\frac{n}{2}  \rceil \lfloor \frac{n}{2} \rfloor - 1$, a contradiction.
    For the second statement, suppose to the contrary that $G\neq K_n-e$. Since the truth of the first statement, $G\neq K_n$. Hence, we may let $e_1$ and $e_2$ be two edges (may have a common end) not in $G$. Take a choice of $V_1$ and $V_2$ such that $e_1,e_2\in E(V_1,V_2)$. So, $\sigma^-(G)\leq |E^-(S)|\leq \lceil\frac{n}{2}  \rceil \lfloor \frac{n}{2} \rfloor - 2$, a contradiction.
    For the third statement, suppose to the contrary that $G\neq \{K_n-2e, K_n-P_2, K_n-\triangle\}$. Since the statements 1 and 2,  we may let $e_1, e_2$ and $e_3$ be three edges not in $G$ which do not induce a triangle. Then we can take a choice of $V_1$ and $V_2$ such that $e_1,\ldots,e_3\in E(V_1,V_2)$. So, $\sigma^-(G)\leq |E^-(S)|\leq \lceil\frac{n}{2}  \rceil \lfloor \frac{n}{2} \rfloor - 3$, a contradiction.
\end{proof}

 For any graph on $m$ edges and n vertices, since $m\leq \frac{n(n-1)}{2}$, we have $\lfloor \frac{m}{2}+\frac{n}{4} \rfloor \leq \lceil\frac{n}{2}  \rceil \lfloor \frac{n}{2} \rfloor$.
 The following theorem shows that $\lfloor \frac{m}{2}+\frac{n}{4} \rfloor$ is also an upper bound for the $rna$ number. Moreover, the graphs which achieve this bound are all determined. 
\begin{theorem}\label{thm_bound}
	For any graph $G$ on $m$ edges and $n$ ($n\geq 4$) vertices, $\sigma^{-}(G)\leq \lfloor \frac{m}{2}+\frac{n}{4} \rfloor$. Moreover, the equality holds if and only if $G$ is $K_n$ or $K_n-e$ or $K_n-\triangle$.
\end{theorem}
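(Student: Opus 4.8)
The plan is to prove the upper bound by a first--moment (averaging) argument and to prove the equality characterization by working with a minimum parity signed graph and its sign--differences, as the remark preceding the statement suggests. For the bound, recall from Proposition~\ref{pro_all_positive} that $\sigma^{-}(G)$ is the least number of edges of $G$ joining the two blocks of a partition of $V(G)$ into sizes $\lfloor n/2\rfloor$ and $\lceil n/2\rceil$ (an \emph{admissible} partition); choosing such a partition uniformly at random, a fixed edge is a crossing edge with probability $\tfrac{n}{2(n-1)}$ ($n$ even) or $\tfrac{n+1}{2n}$ ($n$ odd), so the expected number of crossing edges is $\tfrac{n}{2(n-1)}m$, resp.\ $\tfrac{n+1}{2n}m$, and since $m\le\binom{n}{2}$ this is at most $\tfrac{m}{2}+\tfrac{n}{4}$ in both parities; hence some admissible partition crosses at most $\lfloor\tfrac{m}{2}+\tfrac{n}{4}\rfloor$ edges. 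Writing $\overline{m}=\binom{n}{2}-m$ and using that $\sigma^{-}(G)=\lceil\tfrac{n}{2}\rceil\lfloor\tfrac{n}{2}\rfloor-\max_{(V_1,V_2)}|E_{\overline{G}}(V_1,V_2)|$ together with the arithmetic identity $\lceil\tfrac{n}{2}\rceil\lfloor\tfrac{n}{2}\rfloor-\lfloor\tfrac{m}{2}+\tfrac{n}{4}\rfloor=\lceil\overline{m}/2\rceil$, the theorem becomes, for $G$ connected, the statement that the maximum number of $\overline{G}$--edges across an admissible partition equals $\lceil\overline{m}/2\rceil$ exactly when $\overline{G}$ has at most one edge or is a triangle plus isolated vertices; the same first--moment estimate applied to $\overline{G}$ shows that this maximum exceeds $\lceil\overline{m}/2\rceil$ when $\overline{m}$ is even and positive, so that case yields $G=K_n$, and the displayed identity together with the values of $\sigma^{-}(K_n)$, $\sigma^{-}(K_n-e)$, $\sigma^{-}(K_n-\triangle)$ from Theorem~\ref{thm-trivial-bound} gives the ``if'' direction for $K_n$, $K_n-e$, $K_n-\triangle$.

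For the ``only if'' direction I would assume $\sigma^{-}(G)=\lfloor\tfrac{m}{2}+\tfrac{n}{4}\rfloor$ with $\overline{m}$ odd (the even case being done) and fix a parity signed graph $S=(G,\sigma)$ with $|E^{-}(S)|=\sigma^{-}(G)$ and parity sets $V_1,V_2$. Comparing $S$ with the parity signed graph obtained by a parity--switch at a pair $u\in V_1$, $v\in V_2$ (Proposition~\ref{pro_a_switch}) and using minimality, one gets $d^{\triangle}(u)+d^{\triangle}(v)\le 2$ if $uv\in E(G)$ and $d^{\triangle}(u)+d^{\triangle}(v)\le 0$ otherwise, for all such pairs; also $\sum_v d^{\triangle}(v)=4\sigma^{-}(G)-2m$, which equals $n-2$ for $n$ even and $n-3$ for $n$ odd. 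Summing these inequalities over all of $V_1\times V_2$, and combining ``exactly $\sigma^{-}(G)$ of those pairs are edges'' with the identities $\sum_{u\in V_i}d^{\triangle}(u)=\sigma^{-}(G)-2e_{V_i}$ (where $e_{V_i}$ counts edges of $G$ inside $V_i$) and $e_{V_i}\le\binom{|V_i|}{2}$, one obtains an upper bound on $\overline{m}$ (for $n$ even, $\overline{m}\le n-1$); when it is attained every switch inequality is an equality, so $S$ is degree--balanced, and Lemma~\ref{lem_degree_balanced} leaves only $K_{1,n-1}$, $P_2\vee I_{n-2}$, $K_n$, each of which I would test directly against the bound (only $n=4$ keeps $K_{1,n-1}=K_n-\triangle$, only $n=5$ keeps $P_2\vee I_{n-2}=K_n-\triangle$). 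Otherwise the switch inequalities force $d^{\triangle}(v)\le 1$ for every $v$ --- a vertex of sign--difference $\ge 2$ would make one parity set have non--positive total sign--difference, hence induce a complete graph, hence force $\overline{m}$ too large --- so all sign--differences equal $1$ except on a small ``deficient'' set $D$ of one or two vertices (at most three for $n$ odd); a case analysis on how $D$ meets $V_1$ and $V_2$, using the switch inequalities to make $E(V_1,V_2)$ complete outside $D$ and then reading off the remaining non--edges from the sign--difference identities, shows that $\overline{G}$ is a single edge or a triangle plus isolated vertices, i.e.\ $G\in\{K_n-e,K_n-\triangle\}$.

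The hard part will be this last case analysis. The subcase in which the deficient vertices all lie in the same parity set is the most delicate --- it is immediately contradictory when that set is the smaller one, but needs a separate argument otherwise --- and the whole argument becomes noticeably less symmetric for $n$ odd, since then $|V_1|\ne|V_2|$ and the bookkeeping (both for the $\overline{m}$ bound and for locating $D$) has to be redone with care. One must also remember to re--examine $K_{1,n-1}$ and $P_2\vee I_{n-2}$ coming out of the degree--balanced case, since for $n=4$ and $n=5$ they coincide with $K_n-\triangle$ and therefore do satisfy the equality.
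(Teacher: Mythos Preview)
Your averaging argument for the inequality is correct and is a genuinely different (and cleaner) route than the paper's: the paper works directly with a minimum parity signature and bounds $\sum_v d^\triangle(v)$ via a perfect matching of $V_1\times V_2$ (for $n$ even) or a full double sum (for $n$ odd), whereas your random--partition computation gets $\sigma^-(G)\le\tfrac{mn}{2(n-1)}$ (resp.\ $\tfrac{m(n+1)}{2n}$) in one line. Your complement identity $\lceil n/2\rceil\lfloor n/2\rfloor-\lfloor m/2+n/4\rfloor=\lceil\overline m/2\rceil$ is valid, and the observation that the \emph{same} averaging applied to $\overline G$ forces the maximum cut strictly above $\overline m/2$ whenever $\overline m>0$ is even is correct and neatly disposes of that case. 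For $n$ even and $\overline m$ odd your outline is also right: summing the parity--switch inequalities over $V_1\times V_2$ gives $\overline m\le n-1$, with equality exactly in the degree--balanced case; and your ``$d^\triangle\ge 2$ forces one side complete, hence $\overline m=n-1$'' step does go through (since $\sum_{V_2}d^\triangle\le 0$ gives $\sigma^-\le 2\binom{n/2}{2}=n(n-2)/4$, matching the lower bound from $D\le 2\sigma^-$). The remaining deficient--set analysis for $n$ even is only sketched, but it is short and works as you indicate.

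The genuine gap is the odd--$n$ case. Your claim ``otherwise the switch inequalities force $d^\triangle(v)\le 1$ for every $v$'' is \emph{false} when $n$ is odd. Take $G=K_5-\triangle$ with the triangle on $\{1,2,3\}$ and the minimum partition $V_1=\{1,4\}$, $V_2=\{2,3,5\}$; then $|E^-(S)|=4=\lfloor m/2+n/4\rfloor$, yet $d^\triangle(4)=2$. The argument you give for this claim breaks down because for $|V_2|>|V_1|$ the inequality $\sigma^-\le 2e_{V_2}\le 2\binom{|V_2|}{2}$ is only the trivial bound $\lceil n/2\rceil\lfloor n/2\rfloor$ and yields no contradiction. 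What the paper exploits here, and what your outline omits, is an additional minimality constraint specific to odd $n$: moving a single vertex $z$ from the larger set $V_2$ to $V_1$ is itself an admissible partition, so $d^\triangle(z)\le 0$ for \emph{every} $z\in V_2$. Combining this with a matching argument (choose, for each $z\in V_2$, a perfect matching of $V_1$ with $V_2\setminus\{z\}$ maximizing hits on the non--edge set $Y$) the paper shows $Y$ induces a star; the case analysis then finishes quickly. So your odd--$n$ plan needs a replacement for the ``$d^\triangle\le 1$'' step --- either the single--vertex--move constraint plus a matching argument as in the paper, or some other device --- before the deficient--set bookkeeping can begin.
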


\begin{proof}
Take a parity signature $\sigma$ of $G$ with minimum number of negative edges, i.e., $|E^-(S)|=\sigma^-(G)$, where $S=(G,\sigma)$. Let $V_1$ and $V_2$ be the two parity sets of $S$.
For $i\in\{1,2\}$, let $X_i=\{uv\colon u,v\in V_i \text{~and~} uv\notin E(G)\}$.
Let $Y=\{uv\colon u\in V_1, v\in V_2 \text{~and~} uv\notin E(G)\}$.
By Proposition \ref{pro_a_switch}, switching at a vertex of $V_1$ and a vertex of $V_2$ (i.e., a parity-switch) in $S$ results into another parity signed graph. As $|E^{-}(S)|=\sigma^{-}(G)$, this switch will not decrease the number of negative edges of the whole graph. Thus, for any $u\in V_1$ and $v\in V_2$,
\begin{equation} \label{eq_d}
\begin{cases}
d^\triangle (u)+d^\triangle(v)\leq 2, \text{~if~} uv\in E(S);\\
d^\triangle (u)+d^\triangle(v)\leq 0, \text{~if~} uv\notin E(S).	
\end{cases}
\end{equation}
We distinguish two cases by the parity of $n$.

 Case 1: Assume that $n$ is even.

 Denote by $H$ the complete bipartite graph whose partite sets are $V_1$ and $V_2$.
 Take a perfect matching $M$ of $H$ with minimum $|M\setminus Y|$. Let $M'=M\setminus Y$ and $M''=M\cap Y$.
 It follows from Formula (\ref{eq_d}) that
\begin{equation}\label{eq_M}
 \begin{split}
 \sum_{v\in V(G)}d^\triangle(v)
&=\sum_{xy\in M}(d^\triangle(x)+d^\triangle(y))\\
&=\sum_{xy\in M'}(d^\triangle(x)+d^\triangle(y))
+\sum_{xy\in M''}(d^\triangle(x)+d^\triangle(y))\\
&\leq \sum_{xy\in M'}2
+\sum_{xy\in M''}0\\
&=2|M'|,
 \end{split}
\end{equation}
Moreover, the left side of Formula (\ref{eq_M}) can be written as
 \begin{equation}\label{eq_MLeft}
 \begin{split}
 \sum_{v\in V(G)}d^\triangle(v)
 &=\sum_{v\in V(G)}d^-(v)-\sum_{v\in V(G)}d^+(v)\\
 &=2|E^-(S)|-2|E^+(S)|\\
 &=4|E^-(S)|-2m,
 \end{split}
 \end{equation}
 where last equality uses the fact
 $|E^-(S)|+|E^+(S)|=m.$
 Combining Formulas (\ref{eq_M}) and (\ref{eq_MLeft}) and the fact $|M'|\leq \frac{n}{2}$ yields
 \begin{equation}\label{eq_bound}
 \sigma^-(G)=|E^-(S)|\leq \frac{m}{2}+\frac{|M'|}{2}\leq \frac{m}{2}+\frac{n}{4}.
 \end{equation}
 Since $\sigma^-(G)$ is an integer, it follows that
 \begin{equation}\label{eq_bound_integer}
 \sigma^-(G)\leq \lfloor \frac{m}{2}+\frac{n}{4}\rfloor,
 \end{equation}
 as desired.

Now we consider the equality case of Formula (\ref{eq_bound_integer}).
The sufficiency is easy to verified by using Theorem (\ref{thm-trivial-bound}).
Next, we will prove the necessity. Assume that $\sigma^-(G)= \lfloor \frac{m}{2}+\frac{n}{4}\rfloor$.

 Notice that $\frac{m}{2}+\frac{n}{4}$ may not be an integer. So, we distinguish two cases as follows.

 Case 1.1: First assume that $\frac{m}{2}+\frac{n}{4}$ is an integer, i.e.,
 $\sigma^-(G)= \frac{m}{2}+\frac{n}{4}$.
 It follows from Formula (\ref{eq_bound}) that $|M'|=\frac{n}{2}$.
 By the minimality of $|M'|$, $E(V_1,V_2)$ is complete, i.e., $E^-(S)=|E(V_1,V_2)|=\frac{n^2}{4}.$ Combining this with the assumption $E^-(S)=\sigma^-(G)=\frac{m}{2}+\frac{n}{4}$ yields $m=\frac{n(n-1)}{2}$, i.e., $G$ is the complete graph $K_n$, as desired.

 Case 1.2: It remains to assume that $\frac{m}{2}+\frac{n}{4}$ is not an integer. Since $n$ is even,
 $\sigma^-(G)= \frac{m}{2}+\frac{n}{4}-\frac{1}{2}$.
 It follows from Formula (\ref{eq_bound}) that $|M'|\in\{\frac{n}{2}-1, \frac{n}{2}\}$.

 Case 1.2.1: Let $|M'|=\frac{n}{2}$. By the minimality of $|M'|$, $E(V_1,V_2)$ is complete, i.e., $E^-(S)=|E(V_1,V_2)|=\frac{n^2}{4}.$ Combining this with the assumption $E^-(S)=\sigma^-(G)=\frac{m}{2}+\frac{n}{4}-\frac{1}{2}$ yields $m=\frac{n(n-1)}{2}-1$, i.e., $G$ is the graph $K_n-e$, as desired.

 Case 1.2.2: Let $|M'|=\frac{n}{2}-1$.
 By the minimality of $|M'|$, we can deduce that $Y$ induces a star.
 Denote by $w$ the center of $G[Y]$. W.l.o.g., let $w\in V_1$. Denote that $N(w)=\{y\colon\ wy\in Y\}$.
 If $|Y|=1$, then $\sigma^-(G)=(\frac{n}{2})^2-1.$ Combining this with the assumption that $\sigma^-(G)= \frac{m}{2}+\frac{n}{4}-\frac{1}{2}$, we have $m=\frac{n(n-1)}{2}-1$, i.e., $G$ is $K_n-e$, as desired.
 Hence, we may assume that $|Y|\geq 2$.
 It follows that, for any $x\in V_1$ and $y\in V_2$ such that $(x,y)\notin \{(w,v)\colon\ v\in V_2\setminus N(w)\}$, there always exists a choice for $M$ such that $xy\in M$.
 By putting the values $|M'|=\frac{n}{2}-1$ and $\sigma^-(G)= \frac{m}{2}+\frac{n}{4}-\frac{1}{2}$ into Formulas (\ref{eq_MLeft}) and (\ref{eq_M}) in turn,
 we will see that the equality of Formula (\ref{eq_M}) holds. In particular,
  \begin{equation} \label{eq_almost_db}
 d^\triangle (x)+d^\triangle(y)=
 \begin{cases}
  2, \text{~if~} xy\in E(S);\\
  0, \text{~if~} xy\notin E(S).	
 \end{cases}
 \end{equation}
 If $N(w)=V_2$, then by the arbitrariness of $x$ and $y$, it follows that $G$ is degree-balanced.
 Notice that $n$ is even.
 By Lemma \ref{lem_degree_balanced}, $G$ must be $K_{1,n-1}$ or $K_n$.
 Since $Y$ induces a star, $G$ can not be $K_{1,n-1}$ with $n\neq 4$ or $K_n$.
 Therefore, $G$ must be $K_{1,3}$, i.e., $K_4-\triangle$, as desired.
 Hence, we may next assume that $N(w)\subsetneq V_2$. Since again $Y$ induces a star, by the definition of $d^\triangle$, we have $d^\triangle(v)\geq 0$ for $v\in N(w)$; and $d^\triangle(v)\geq 1$ for $v\in V\setminus(N(w)\cup \{w\})$.
 Combining this with Formula (\ref{eq_almost_db}), we can deduce that $d^\triangle(w)= -1$ and $d^\triangle(v)=1$ for $v\in V(G)\setminus \{w\}$.
 It follows from the latter equality that $d^+(v)=\frac{n}{2}-1$ for $v\in V\setminus(N(w)\cup \{w\})$ and $d^+(v)=\frac{n}{2}-2$ for $v\in V(w)$. 
 That is to say, $X_1=\emptyset$ and $X_2$ induces a matching whose ends form the set $N(w)$. So, $d^+(w)=\frac{n}{2}-1$. Recall that $d^\triangle(w)= -1$, we thereby obtain $\frac{n}{2}-|Y|=d^-(w)=d^\triangle(w)+d^+(w)=\frac{n}{2}-2$. So, $|Y|=2$. This implies that $X_1\cup X_2\cup Y$ induce a triangle. So, $G$ is $K_n-\triangle$, as desired.

 Case 2: Assume that $n$ is odd. W.l.o.g., let $|V_1|<|V_2|$, i.e., $|V_1|=\frac{n-1}{2}$ and $|V_2|=\frac{n+1}{2}$.

Notice that $\sum_{x\in V_1}d^-(x)$ counts each edge of $E(V_1,V_2)$ precisely once.
So,
$$\sum_{x\in V_1}d^-(x)=\frac{(n-1)(n+1)}{4}-|Y|.$$
Similarly, since $\sum_{x\in V_1}d^+(x)$ counts each edge of $G[V_1]$ precisely twice,
we have
$$\sum_{x\in V_1}d^+(x)=2(\frac{(n-1)(n-3)}{8}-|X_1|).$$
Hence, by the definition of the function $d^\triangle$, we can compute that
\begin{equation}\label{value_1}
\begin{split}
\sum_{x\in V_1}d^\triangle(x)
&=\sum_{x\in V_1}d^-(x)-\sum_{x\in V_1}d^+(x)\\
&=(\frac{(n-1)(n+1)}{4}-|Y|)-2(\frac{(n-1)(n-3)}{8}-|X_1|)\\
&=n-1+2|X_1|-|Y|.
\end{split}
\end{equation}
Similarly, we can deduce that
\begin{equation}\label{value_2}
\begin{split}
\sum_{y\in V_2}d^\triangle(y)
&=(\frac{(n-1)(n+1)}{4}-|Y|)-2(\frac{(n+1)(n-1)}{8}-|X_2|)\\
&=2|X_2|-|Y|.
\end{split}
\end{equation}
Now, the following sum, denoted by $D$, can be written as
\begin{equation}\label{value_D1}
\begin{split}
D
&=\sum_{\substack{x\in V_1\\ y\in V_2}}(d^\triangle(x)+d^\triangle(y))\\
&=\frac{n+1}{2}\sum_{x\in V_1}d^\triangle(x)+\frac{n-1}{2}\sum_{y\in V_2}d^\triangle(y)\\
&=\frac{(n-1)(n+1)}{2}-n|Y|+(n+1)|X_1|+(n-1)|X_2|,
\end{split}
\end{equation}
where the last equality uses Formulas (\ref{value_1}) and (\ref{value_2}).

By the definition of $X_1, X_2$ and $Y$, we have
\begin{eqnarray}
|X_1|+|Y|+|X_2|+m=\frac{n(n-1)}{2} \label{identity_1},\\
|Y|+\sigma^-(G)=\frac{(n-1)(n+1)}{4} \label{identity_2}.
\end{eqnarray}
From these two equations, we can easily deduce that
\begin{equation}\label{identity_3}
|X_1|+|X_2|-|Y|=2\sigma^-(G)-\frac{n-1}{2}-m.
\end{equation}
Now, Formula (\ref{value_D1}) can be further written as follows:
\begin{equation}\label{value_D2}
\begin{split}
D
&=2(|Y|+\sigma^-(G))-n|Y|+(n+1)|X_1|+(n-1)|X_2|\\
&=2\sigma^-(G)+(n-2)(|X_1|+|X_2|-|Y|)+3|X_1|+|X_2|\\
&=2\sigma^-(G)+(n-2)(2\sigma^-(G)-\frac{n-1}{2}-m)+3|X_1|+|X_2|.
\end{split}
\end{equation}
where the first equality uses Formula (\ref{identity_2}) and the last equality uses Formula (\ref{identity_3}).

On the other hand, it follows from Formula (\ref{eq_d}) that \begin{equation}\label{value_D3}
\begin{split}
D
&=\sum_{\substack{x\in V_1\\ y\in V_2}}(d^\triangle(x)+d^\triangle(y))\\
&\leq 2\sigma^-(G),
\end{split}
\end{equation}
where the equality holds if and only if $G$ is degree-balanced.
Combining Formulas (\ref{value_D2}) and (\ref{value_D3}) yields that
\begin{equation}\label{value_sigma}
\sigma^-(G)\leq \frac{n-1}{4}+\frac{m}{2}.
\end{equation}
Since $\sigma^-(G)$ is an integer and $n$ is odd, we further have
\begin{equation}\label{value_sigma_floor}
\sigma^-(G)\leq \lfloor \frac{n}{4}+\frac{m}{2} \rfloor,
\end{equation}
as desired.

 Now we consider the equality case of Formula (\ref{value_sigma_floor}).
 The sufficiency is easy to verified by using Theorem \ref{thm-trivial-bound}.
 Next, we will prove the necessity. Assume that $\sigma^-(G)=\lfloor \frac{n-1}{4}+\frac{m}{2} \rfloor$.
Notice that $\frac{n-1}{4}+\frac{m}{2}$ may be an integer. So, we distinguish two cases as follows.

Case 2.1: Assume that $\frac{n-1}{4}+\frac{m}{2}$ is an integer, i.e.,
$\sigma^-(G)= \frac{n-1}{4}+\frac{m}{2}$. Put this value into Formula (\ref{value_D2}),
we thereby have $D=2\sigma^-(G)+3|X_1|+|X_2|$. Combining this with Formula (\ref{value_D3}) gives $|X_1|=|X_2|=0$. So, the equality of Formula (\ref{value_D3}) holds, that is, $G$ is degree-balanced. By Lemma \ref{lem_degree_balanced}, $G$ must be $P_2\vee I_{n-2}$ or $K_n$. Since $|X_1|=|X_2|=0$, we can easily check that $G$ can not be $P_2\vee I_{n-2}$ with $n\geq 5$. Notice that $P_2 \vee I_1$ is $K_3$. Therefore, we can conclude that $G$ must be $K_n$, as desired.

Case 2.2: Assume that $\frac{n-1}{4}+\frac{m}{2}$ is not an integer, i.e.,
$\sigma^-(G)= \frac{n-1}{4}+\frac{m}{2}-\frac{1}{2}$. In this case, we can deduce from Formulas (\ref{identity_1}) and (\ref{identity_2}) that
\begin{equation}\label{relation}
|Y|=|X_1|+|X_2|+1.
\end{equation}

We shall show that $Y$ induces a star. Let $z$ be any vertex of $V_2$. If $d^\triangle(z)\geq 1$, then moving $z$ into $V_1$ results into another parity-partition of $G$, whose negative edges are less than $\sigma^-(G)$, a contradiction. Hence,
\begin{equation}\label{dz}
d^\triangle(z)\leq 0.
\end{equation}
Denote by $H(z)$ the complete bipartite graph whose partite sets are $V_1$ and $V_2\setminus \{z\}$.  Denote by $t(z)$ the maximum value of $|M\cap Y|$ over all the perfect matchings $M$ of $H(z)$.
\begin{equation}\label{sum1}
\begin{split}
\sum_{v\in V(G)}d^\triangle(v)
&=\sum_{xy\in M}(d^\triangle(x)+d^\triangle(y))+d^\triangle(z)\\
&\leq t(z)\times 0 + (\frac{n-1}{2}-t(z))\times 2 +d^\triangle(z)\\
&=n-1-2t(z)+d^\triangle(z).
\end{split}
\end{equation}
On the other hand, adding up Formulas (\ref{value_1}) and (\ref{value_2}) gives
\begin{equation}\label{sum2}
\begin{split}
\sum_{v\in V(G)}d^\triangle(v)
&=\sum_{x\in V_1}d^\triangle(x)+\sum_{y\in V_2}d^\triangle(y)\\
&=n-1+2(|X_1|+|X_2|-|Y|)\\
&=n-3,
\end{split}
\end{equation}
where the last equality uses Formula (\ref{relation}).
Combining Formulas (\ref{dz}), (\ref{sum1}) and (\ref{sum2}) gives
\begin{equation}\label{tz}
t(z)\leq 1.
\end{equation}
Notice that $|Y|\geq 1$, implied by Formula (\ref{relation}).
By the arbitrariness of $z$ and the definition of $t$, it follows from Formula (\ref{tz}) that $Y$ must induce a star.

If $|Y|=1$, then Formula (\ref{relation}) implies that $|X_1|=|X_2|=0$, i.e., $G$ is the graph $K_n-e$, as desired.

Hence, we may next assume that $|Y|\geq 2$.
Denote by $w$ the center of the star $G[Y]$.
It follows that for any $y\in V_2\setminus\{w\}$, we have $t(y)=1$ by the definition of $t$. Combining this  with Formulas (\ref{sum1}) and (\ref{sum2}) gives $d^\triangle(y)\geq 0.$ By Formula (\ref{dz}), we further have
\begin{equation}\label{dy=0}
d^\triangle(y)=0.
\end{equation}
We distinguish two cases.

Case 2.2.1: Assume that $w\in V_1$. It follows from Formula (\ref{dy=0}) that $\sum_{y\in V_2}d^\triangle(y)=0$, i.e., $|Y|=2|X_2|$ by Formula (\ref{value_2}).
Moreover, for any $x\in V_1\setminus \{w\}$, applying Formula (\ref{eq_d}) to $x$ and $y$ gives $d^\triangle(x)\leq 2$.
Recall that $Y$ induces a star. So, $d^-(x)=\frac{n+1}{2}$.  We can conclude that $d^+(x)=d^-(x)-d^\triangle(x)\geq \frac{n-3}{2}$, i.e., $x$ is adjacent to all other vertices of $V_1$. By the arbitrariness of $x$, it follows that $|X_1|=0$.
Put the values $|Y|=2|X_2|$ and $|X_1|=0$ into Formula (\ref{relation}), we thereby compute that $|X_2|=1$. Now, we can see that $G$ is $K_n-\triangle$, as desired.

Case 2.2.2: Assume that $w\in V_2$. Recall that $Y$ induces a star. In this case, for any $y\in V_2\setminus\{w\}$, $d^-(y)=\frac{n-1}{2}$. Combining this with Formula (\ref{dy=0}) yields $d^+(y)=d^-(y)-d^\triangle(y)=\frac{n-1}{2}$, i.e., $y$ is adjacent to all other vertices of $V_2$. By the arbitrariness of $y$, it follows that $|X_2|=0$.
Moreover, for any $x\in V_1$, applying Formula (\ref{eq_d}) to $x$ and $y$ gives $d^\triangle(x)\leq 2$. So, $\sum_{x\in V_1}d^\triangle(x)\leq 2\times|V_1|=n-1$.
Combining this with Formula (\ref{value_1}) gives $2|X_1|\leq |Y|.$
By putting Formula (\ref{relation}) and $|X_2|=0$ into this inequality, we have $|X_1|\leq 1$.
For $|X_1|=0$, we can see that $G$ is $K_n-e$, as desired.
For $|X_1|=1$, $G$ is $K_n-\triangle$, as desired.
\end{proof}

Remark: For graphs of order at most 4 (i.e., $n\leq 4$), it is easy to check that only $K_2$ and $K_3$ do not satisfy the upper bound given by Theorem \ref{thm_bound}.

\subsection{Parity complement}\label{sec_complement}
Denote by $\overline{G}$ the complement of a graph $G$. The \emph{parity complement} $\overline{S}_p$ of a parity signed graph $S$ is the parity signed graph whose underlying graph is the complement of the one of $S$ and whose parity sets are same as the one of $S$. This definition is formulated slightly different from but equivalent to the one in \cite{s6}. 

In this section, we answer the following question on the $rna$ number of the parity complement of a parity signed graph, proposed in \cite{s6}.
\begin{question}
For any parity signed graph $S$, what is the relation between $\sigma^-(S)+\sigma^-(\overline{S}_p)$ and $\sigma^-(S\cup \overline{S}_p)$?	
\end{question}
Since the $rna$ number of a parity signed graph is actually defined for its underlying graph, this question can be equivalently formulated as: for any graph $G$, what is the relation between $\sigma^-(G)+\sigma^-(\overline{G})$ and $\sigma^-(G\cup \overline{G})$?
We answer this question by the following theorem.
\begin{theorem}
For any graph $G$, $\sigma^-(G)+\sigma^-(\overline{G})\leq \sigma^-(G\cup \overline{G})$. The equality holds if and only if $G$ is $K_{1, n-1} $ with $n$ even or $K_{n}$.		
\end{theorem}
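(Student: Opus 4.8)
The plan is to leverage the identity $G\cup\overline{G}=K_n$, which makes the right-hand side equal to $\sigma^-(K_n)=\lceil\frac{n}{2}\rceil\lfloor\frac{n}{2}\rfloor$, and then to play a single vertex-bipartition off against both $G$ and $\overline{G}$. Recall first that $\sigma^-(H)$ is precisely $\min\{|E_H(V_1,V_2)|\colon ||V_1|-|V_2||\le 1\}$ for an arbitrary graph $H$ (this makes sense even when $H$ is disconnected, as $\overline{G}$ may be), and that the ``nearly balanced'' requirement depends only on $|V_1|,|V_2|$ and is therefore common to $G$, $\overline{G}$ and $K_n$. For the inequality, choose a bipartition $(V_1,V_2)$ with $|E_G(V_1,V_2)|=\sigma^-(G)$; applying this same bipartition to $\overline{G}$ and using $|E_G(V_1,V_2)|+|E_{\overline{G}}(V_1,V_2)|=|V_1||V_2|=\lceil\frac{n}{2}\rceil\lfloor\frac{n}{2}\rfloor$ gives
$\sigma^-(\overline{G})\le |E_{\overline{G}}(V_1,V_2)|=\lceil\frac{n}{2}\rceil\lfloor\frac{n}{2}\rfloor-\sigma^-(G)$, i.e. $\sigma^-(G)+\sigma^-(\overline{G})\le\sigma^-(K_n)=\sigma^-(G\cup\overline{G})$.

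For the equality characterization, the \emph{if} direction is a short computation: $\sigma^-(K_n)=\lceil\frac{n}{2}\rceil\lfloor\frac{n}{2}\rfloor$ while $\overline{K_n}$ is edgeless with $\sigma^-=0$; and for $n$ even, $\sigma^-(K_{1,n-1})=\frac{n}{2}$ while $\overline{K_{1,n-1}}=K_{n-1}\cup K_1$ has $\sigma^-=\frac{n}{2}(\frac{n}{2}-1)$, so in both cases the two terms sum to $(\frac{n}{2})^2$. For the \emph{only if} direction, assume equality. Chasing the displayed estimate above, the chosen bipartition $(V_1,V_2)$, which realizes $\sigma^-(G)$, must also realize $\sigma^-(\overline{G})$. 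Hence Formula (\ref{eq_d}) from the proof of Theorem \ref{thm_bound} applies both to $(G,\sigma)$ and to $(\overline{G},\sigma')$ with parity sets $V_1,V_2$, giving $d^\triangle_G(u)+d^\triangle_G(v)\le 2[uv\in E(G)]$ and $d^\triangle_{\overline{G}}(u)+d^\triangle_{\overline{G}}(v)\le 2[uv\in E(\overline{G})]$ for all $u\in V_1$, $v\in V_2$. The key elementary identity is that, for $u\in V_1$, $d^\triangle_{\overline{G}}(u)=(|V_2|-|V_1|+1)-d^\triangle_G(u)$ (and symmetrically for $v\in V_2$), so that $d^\triangle_{\overline{G}}(u)+d^\triangle_{\overline{G}}(v)=2-d^\triangle_G(u)-d^\triangle_G(v)$, the size terms cancelling irrespective of the parity of $n$. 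Substituting, the two inequalities force $d^\triangle_G(u)+d^\triangle_G(v)=2[uv\in E(G)]$ for all $u\in V_1$, $v\in V_2$; that is, $(G,\sigma)$ is degree-balanced, so by Lemma \ref{lem_degree_balanced} $G$ is $K_{1,n-1}$ or $K_n$ ($n$ even) or $P_2\vee I_{n-2}$ or $K_n$ ($n$ odd).

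It then remains to discard $G=P_2\vee I_{n-2}$ for $n\ge 5$ (the case $n=3$ being $K_3=K_n$). Here $\sigma^-(P_2\vee I_{n-2})=n-1$ since $\Sigma^-(P_2\vee I_{n-2})=\{n-1,n+1\}$ as noted in Section \ref{sec_conjecture}, whereas $\overline{P_2\vee I_{n-2}}=K_{n-2}\cup 2K_1$, and pushing both isolated vertices to the smaller part gives $\sigma^-(\overline{P_2\vee I_{n-2}})=\frac{(n-5)(n+1)}{4}$; their sum is $\frac{n^2-9}{4}<\frac{n^2-1}{4}=\sigma^-(K_n)$, so equality fails. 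This leaves exactly $K_n$ and $K_{1,n-1}$ with $n$ even, completing the proof.

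The routine parts are the inequality and the \emph{if} direction; the main obstacle is the \emph{only if} direction, and within it the cleanest point to get right is the derivation of degree-balance: one must check that the parity-switch bound of Theorem \ref{thm_bound} is available for \emph{any} optimal bipartition (which is immediate, since Formula (\ref{eq_d}) only uses that $|E^-(S)|=\sigma^-(G)$), and one must handle the case $n$ odd where $|V_1|\ne|V_2|$, where the cancellation in the identity $d^\triangle_{\overline{G}}(u)+d^\triangle_{\overline{G}}(v)=2-d^\triangle_G(u)-d^\triangle_G(v)$ is precisely what rescues the argument. A minor but worth-stating point is that $\sigma^-$ is being applied to the possibly disconnected graph $\overline{G}$, which is unproblematic under the cut-minimization definition recalled above.
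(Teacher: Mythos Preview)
Your proof is correct. The inequality step and the \emph{if} direction of the equality characterization match the paper's argument in spirit.

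For the \emph{only if} direction, your route diverges from the paper's. The paper observes that, since $|E^-(S)|+|E^-(\overline{S}_p)|=\lfloor\frac{n}{2}\rfloor\lceil\frac{n}{2}\rceil$ for every parity signature, one has $\max\Sigma^-(G)+\min\Sigma^-(\overline{G})=\lfloor\frac{n}{2}\rfloor\lceil\frac{n}{2}\rceil$; hence equality in the theorem is literally equivalent to $\min\Sigma^-(G)=\max\Sigma^-(G)$, i.e.\ $\Sigma^-(G)=\{\sigma^-(G)\}$, and the characterization then follows in one line from the (already proved) Conjecture~\ref{conj}. You instead work with a single bipartition optimal for both $G$ and $\overline{G}$, and use the pleasant identity $d^\triangle_{\overline{G}}(u)+d^\triangle_{\overline{G}}(v)=2-d^\triangle_G(u)-d^\triangle_G(v)$ to convert the two one-sided parity-switch inequalities into the exact degree-balance equalities, after which you invoke Lemma~\ref{lem_degree_balanced} directly and eliminate $P_2\vee I_{n-2}$ by hand. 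Your approach is self-contained (it does not cite Conjecture~\ref{conj}) and exhibits a nice complementarity at the level of sign-differences; the paper's approach is shorter and highlights that this theorem and Conjecture~\ref{conj} are essentially the same statement.
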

\begin{proof}
Let $n=|V(G)|$.
Clearly, $\sigma^-(G\cup \overline{G})=\sigma^-(K_n)=\lfloor \frac{n}{2}\rfloor \lceil \frac{n}{2}\rceil$.
Let $S$ be any parity signed graph whose underlying graph is $G$.
By the definition of parity complement, $|E^-(S)|+|E^-(\overline{S}_p)|=\lfloor \frac{n}{2}\rfloor \lceil \frac{n}{2}\rceil.$ Since the arbitrariness, it follows that $\max \Sigma^-(G) + \min \Sigma^-(\overline{G})=\lfloor \frac{n}{2}\rfloor \lceil \frac{n}{2}\rceil.$ Therefore, $\sigma^-(G)+\sigma^-(\overline{G})\leq \max \Sigma^-(G) + \min \Sigma^-(\overline{G})= \sigma^-(G\cup \overline{G})$.
Clearly, the equality holds if and only if $\sigma^-(G)=\max \Sigma^-(G)$, i.e., $\Sigma^-(G)=\{\sigma^-(G)\}$.
By the truth of Conjecture \ref{conj}, this is equivalent to that $G$ is $K_{1, n-1} $ with $n$ even or $K_{n}$.
\end{proof}

\end{document}